\begin{document}
\title[Simulation of multivariate W-F diffusions]{Exact simulation of coupled Wright-Fisher diffusions}      

\author[García-Pareja, Hult \and Koski]{Celia García-Pareja}
\address[Celia García-Pareja*]{Unit of Biostatistics, IMM, Karolinska Institutet}
\curraddr{Department of Mathematics, KTH Royal Institute of Technology}
\email[Corresponding author*]{celiagp@kth.se}
\author[]{Henrik Hult \and Timo Koski}
\address[Henrik Hult \and Timo Koski]{Department of Mathematics, KTH Royal Institute of Technology}
\email[]{hult@kth.se, tjtkoski@kth.se}

\keywords{exact simulation, rejection algorithm, multivariate diffusions, population genetics, coupled Wright-Fisher model}

\renewcommand{\thefootnote}{\fnsymbol{footnote}} 
\footnotetext{\emph{MSC2010 subject classifications.} Primary 60J60, 65C30; secondary 60H35, 65C05.}     
\renewcommand{\thefootnote}{\arabic{footnote}}

\begin{abstract}
In this paper an exact rejection algorithm for simulating paths of the coupled Wright-Fisher diffusion 
is introduced. The coupled Wright-Fisher diffusion is a family of 
multidimensional Wright-Fisher diffusions that have drifts depending on each other through a coupling term and that find applications in the study of interacting genes' networks as those encountered in studies of antibiotic resistance.
Our algorithm uses independent neutral Wright-Fisher 
diffusions as candidate proposals, which can be sampled exactly by means of existing algorithms 
and are only needed at a finite number of points. Once a candidate is accepted, the remaining of the path 
can be recovered by sampling from a neutral multivariate Wright-Fisher bridge, for which we also provide 
an exact sampling strategy. The technique relies on a modification of the alternating series method and 
extends existing algorithms that are currently available for the one-dimensional case. Finally, the
algorithm's complexity is derived and its performance demonstrated in a simulation study.
\end{abstract}

\maketitle

\section{Introduction}
Sampling paths of a diffusion process remains a challenging problem in applied probability. The major 
bottleneck is that their finite dimensional distributions are seldom available in closed form, 
and one often needs resorting to time-discretized numerical approximations. These approximations, however, 
induce bias 
and approximation errors that are difficult to quantify. Moreover, reducing such errors requires refining the 
time grid, which, in turn, increases computational costs. In this context, exact simulation algorithms, which aim to recover samples from the true finite dimensional distributions of a diffusion, 
have become increasingly popular.

The standard approach to exact simulation of diffusions is based on the family of exact rejection algorithms, which 
rely on an acceptance-rejection scheme that requires samples 
from a candidate diffusion only at a finite collection of time points in order to take a decision. 
The candidate needs to be such 
that is possible to simulate without approximation, and that allows for the construction of the
acceptance-rejection probability by means of the Girsanov's transformation of measures, see \cite{Karatzas1998}. 
Once a candidate is accepted, the algorithm returns a skeleton of the target path, 
and the remaining segments 
can be sampled at any other time instance by simulating from suitable diffusion bridges and with no further 
reference 
to the unknown target distribution. 
In their seminal paper, Beskos and Roberts \cite{Beskos2005} present an exact rejection algorithm for 
simulation 
of paths of a certain class of one-dimensional diffusions, which requires imposing some boundedness assumptions 
on the drift of 
the target diffusion and its derivative. Acknowledged as too restrictive, 
these assumptions are relaxed to one-sided bounds in a second publication, see \cite{Beskos2006}, and a further extension based on a layered Brownian bridge 
construction 
provides an exact simulation method 
for boundary 
crossing and hitting times, see \cite{Beskos2008}. Other extensions include algorithms that allow for 
simulation of killed diffusions and have applications to double barrier option pricing problems 
\cite{Casella2008}, or a localized exact algorithm that relaxes any boundedness assumptions by 
considering smaller pieces of the target path and can be used to simulate diffusions with boundaries 
\cite{Chen2013}. 
An interesting approach is also that presented in \cite{Pollock2016}, where the authors provide an exact 
rejection algorithm for jump diffusions. 

Downsides of the exact rejection algorithms presented above include that they impose somewhat strong 
assumptions on the drift and require the use of the Lamperti transformation, see \cite{Kloeden1992}, 
to obtain unit volatility coefficients, which hinders generalizability to the multivariate case. 
To overcome these issues, alternative techniques have been proposed, such as the one in \cite{Blanchet2017} 
that introduces an exact algorithm for simulation of multivariate diffusions based on tolerance 
enforced simulation and rough paths analysis. This algorithm overcomes the more restrictive assumptions 
required in \cite{Beskos2005} and \cite{Beskos2006} but has, admittedly, infinite expected running time. 

Another restrictive feature of exact rejection algorithms is that they rely heavily on the availability 
of suitable candidates (in all cases mentioned above, Brownian motion or slight modifications thereof). 
For diffusions with finite boundaries, for example, 
Brownian candidates can either differ too much from the target, thus providing low 
acceptance probabilities, or be unsuitable to construct the acceptance-rejection probability itself. 
Rejection algorithms with candidates other than Brownian motion include \cite{Jenkins2013} that uses Bessel proposals to simulate a certain class of diffusions 
with a finite entrance boundary.

In this context, the recent work in \cite{Jenkins2017} extends the class of diffusions for which exact 
rejection 
simulation is possible. The authors propose
a simulation technique to recover samples from neutral Wright-Fisher diffusions that, in turn, are used 
as candidates 
in an exact rejection algorithm for simulating a wider target family of one-dimensional Wright-Fisher diffusions. This class of diffusions, as well as its multivariate counterparts, are extensively used in population genetics, where proliferation of exact simulation algorithms can foster the use of suitable inferential techniques such as approximate Bayesian computation, see \cite{Tavare1997}, \cite{Beaumont2002}, \cite{Fearnhead2012}.

Along these lines, a main contribution of this paper is to present an exact rejection algorithm for coupled Wright-Fisher diffusions, with candidates 
built from samples of independent neutral Wright-Fisher diffusions that can be recovered using the techniques 
presented in \cite{Jenkins2017}.
The coupled Wright-Fisher diffusion \cite{Aurell2019a}, 
is a family of multivariate Wright-Fisher diffusions that models how different allele types (genetic traits) co-evolve across different loci (different locations along the genome), over generations. This type of diffusion model is used to analyse networks of loci in recombining populations of bacteria (e.g.\ Streptococcus Pneumoniae) under strong selective pressure, when the linkage disequilibrium is low across the genome, see \cite{Ekeberg2013, Skwark2017}. It incorporates parent dependent mutation, interlocus selection and free recombination. In contrast, the model is unsuitable in populations of bacteria  where the amount of homologous recombination is low, which makes it difficult to separate couplings arising from recombination from those arising from selection (e.g. Streptococcus Pyogenes). Moreover, diffusion approximations have been deemed poor in some scenarios, e.g. for low mutation rates where the stationary density is ill-defined at the boundary, see \cite{Hoessjer2016}. 

The coupled Wright-Fisher  
is based on quasi-linkage equilibrium where the fitness coefficients are inspired by a Potts model, see \cite{Aurell2019a,Shraiman2011},  
and generalize the classical additive fitness under weak selection, see e.g. \cite[Ch. II]{Burger2000}, to the multi-locus case.  With two loci and without the first order selection terms, the coupled Wright-Fisher diffusion corresponds to a haploid version of the model with weak selection, loose linkage in \cite{ethier1989}, see also \cite{Fearnhead2006} for a different multi-locus extension. The coalescent model associated to the coupled Wright-Fisher diffusion, describing the ancestral history of a sample of individuals, is derived in \cite{Favero2020} using Markov duality.

To complete the proposed exact rejection algorithm, a further contribution of this paper deals with simulation of multidimensional 
Wright-Fisher bridges, for which we present an exact simulation technique. These bridges allow sampling further points of the path once a skeleton of the coupled Wright-Fisher 
diffusion has been accepted. Our sampling approach can therefore be viewed as a generalization of 
that presented in \cite{Jenkins2017} for the one-dimensional Wright-Fisher diffusions to the multivariate case.

The rest of the paper is structured as follows. In Section \ref{sec:background}, main properties and 
structure of the family of coupled Wright-Fisher diffusions are briefly presented jointly with a formal 
overview on exact rejection algorithms. In Section \ref{sec:candidate} we recall and present some revised 
algorithms for exact simulation of one and multidimensional neutral Wright-Fisher diffusions, i.e., 
those needed  for sampling our candidate processes, leading up to the proposed exact rejection algorithm 
for coupled Wright-Fisher diffusions (Section \ref{sec:exact_alg_coupled}). Section \ref{sec:num_exp} 
includes performance results illustrated through several simulation scenarios and in Section \ref{sec:bridge} 
the technique for simulating exactly from a multidimensional Wright-Fisher bridge is provided, which completes 
the sampling scheme. 
Finally, Section \ref{sec:proofs} contains mathematical proofs.

\section{Background}\label{sec:background}
This section provides the necessary insights on the structure and main properties of the coupled Wright-Fisher family of diffusions and fixes some notation, as well as provides a brief overview of exact rejection algorithms for diffusions, which constitute the basis of our work.
\subsection{Coupled Wright-Fisher diffusions}
The family of Wright-Fisher models, and more specifically their diffusion approximations, have been 
widely used in population genetics, see, for instance, \cite{Kimura1964} and \cite{Griffiths1980}
. In its simplest form, the Wright-Fisher 
model describes the evolution of the frequency of two allele types in a single locus 
that have the same fitness, and whose configuration at each new generation of individuals is 
chosen uniformly and with replacement from that of the current generation in an haploid population of constant size.  
Extensions of the  
model include considering more than two allele types that might be located at different loci, and can incorporate 
other evolutionary forces such as mutation, selection and recombination. A comprehensive overview on the family of 
Wright-Fisher models can be found, for example, in \cite{Crow1970} or \cite{Ewens2004}.

With the proliferation of genome-wide association studies, questions arise about how genetic variants associated to numerous 
diseases co-evolve or interact over time. Moreover, the increasing availability of allele frequency time series data is fostering 
the study of evolutionary forces such as mutation or selection, see \cite{Steinrucken2014}, 
\cite{Skwark2017}, \cite{Tataru2017}, \cite{Nene2018}. Within this framework, the recently proposed coupled Wright-Fisher model \cite{Aurell2019a} 
tracks the evolution of frequencies of allele types located at different loci, 
and, besides locus-wise mutations, describes possible selective pairwise interactions between allele type frequencies 
across different loci in an haploid population. Its diffusion approximation can be derived as the weak limit of a sequence of discrete Wright-Fisher models characterized by the assumption that the evolution of the population at one locus is conditionally independent of the other loci given the state of the previous generation. 
The coupled Wright-Fisher diffusion can be expressed as a 
system of stochastic differential equations of the form 
\begin{equation}\label{eq:coupled_WF_general}
dX_t=[\alpha(X_t)+G(X_t)]dt+ D^{\frac{1}{2}}(X_t)dB_t, \ X_0=x_0,\ t\in[0,T],
\end{equation}
where $X_t$ is a vector of frequencies of allele types, $\alpha$ governs their mutations  and $G$ contains the single and pairwise selective locus interactions.
   
Let $L$ denote the total number of loci and $d_i\geq 2$ the number of different allele types in each of them. For $n:=\sum_{i=1}^L [d_i-1]$, let us index each element of an $n$-dimensional vector $x$ by its referring to a 
specific locus $i\in\{1,\ldots , L\}$ and allele type $j\in\{1,\ldots, d_i-1\}$, so that $x=\{x^i\}_{i=1}^L$ 
where each $x^i=\{x^{ij}\}_{j=1}^{d_i-1}$. If $x\in\mathbf{R}^n$ refers to the vector of allele type frequencies $X_t$, 
the elements of the drift $\alpha(x)\in\mathbf{R}^n$ take the form 
\begin{equation}\label{eq:drift_coupled_WF_general}
\alpha^{ij}(x^{ij})=\frac{1}{2}\left(\theta^{i}_{j}-|\theta|x^{ij}\right),
\end{equation}
where $|\theta|=\sum_{k=1}^{d_i}\theta^{i}_{k}$ and $\theta^{i}_{k}>0$ denote the parent-independent mutation rates to allele type $k\in\{1,\ldots, d_i\}$ at locus $i$, 
so that mutations occur at each locus separately. Wright-Fisher diffusions with drift $\alpha(x)$ correspond to the reversible neutral mutations allele model.  
The coupled Wright-Fisher model also admits  
parent-dependent mutations, but we will not consider them here.

The coupling term $G(x)\in\mathbf{R}^n$ has general form
\begin{equation}
G(x)=D(x)\nabla_{x} (\overline{V}\circ f)(x),
\end{equation}
where the square of the diffusion matrix $D(x)=\text{diag}(D^i(x^i))\in\mathbf{R}^{n\times n}$ is an 
$L$-blocks diagonal matrix with entries 
$$
D^i_{jk}=
\begin{cases}
  x^{ij}(1-x^{ij}), j=k, \\
  -x^{ij}x^{ik}, j\neq k,
\end{cases} j,k \in\{1,\ldots, d_i-1\}, i\in\{1,\ldots , L\},
$$
$\nabla_{x}$ is the gradient operator w.r.t.~each component of $x$, $f$ transforms $x$ in the augmented 
$(n+L)$-dimensional vector 
$\overline{x}$ that reflects the dependency between allele frequencies at locus $i$, i.e., 
$$
\overline{x}^{ik}:=f^{ik}(x)=
\bigg\{
\begin{array}{cl}
 x^{ij},& j=k\in\{1,\ldots, d_i-1\},\\
 1-\sum_{j=1}^{d_i-1}x^{ij},& k=d_i,\\
\end{array}$$
and $\overline{V}(\overline{x})\in \mathbf{R}$,
\begin{equation}
\overline{V}(\overline{x})=(\overline{x})^Ts+\frac{1}{2}(\overline{x})^TH\overline{x},
\end{equation}
where $s\in\mathbf{R}^{n+L}$ is a within locus selection parameters vector and 
$H\in\mathbf{R}^{(n+L)\times(n+L)}$ is a symmetric across-loci pairwise 
interactions matrix. The matrix $H$ is in fact built by $L$ blocks of zeros of size $d_i\times d_i$ in the main diagonal 
(denoted $0^{ii}$), and off-diagonal blocks of the form $H^{il}=(H^{li})^T\in\mathbf{R}^{d_i\times d_l}, i\neq l, i,l\in\{1,\ldots, L\}$,
$$
H=
\begin{pmatrix}
 0^{11}&H^{12}&\ldots&\ldots&\ldots&\ldots&H^{1L}\\
 \vdots&\ddots&\vdots&\vdots&\vdots &\vdots &\vdots\\
 H^{i1}&\ldots&H^{i(i-1)}&0^{ii}&H^{i(i+1)}&\ldots&H^{iL} \\
 \vdots&\ddots&\vdots&\vdots&\ddots &\vdots &\vdots\\
 H^{L1}&\ldots&\ldots&\ldots&\ldots&\ldots&0^{LL} \\
\end{pmatrix},
$$
so that interactions of each locus with itself are not permitted. Note that if one removes the coupling term $G$, 
(\ref{eq:coupled_WF_general}) simply becomes the usual multidimensional neutral mutations Wright-Fisher diffusion, 
where $X_t$ describes the evolution of allele frequencies that evolve independently at each locus. The explicit form of $V(x):=\nabla_{x} (\overline{V}\circ f)(x)$ in terms of $s$ and $H$ is specified in the 
following proposition, whose proof can be found in Section \ref{sec:proofs}.

\begin{prop}\label{prop:gradient}
Let $V(x)\in\mathbf{R}^n$ be the $n$-dimensional vector such that $V(x):=\nabla_{x} (\overline{V}\circ f)(x)$.
Then $\forall i \in\{1,\ldots, L\}, j\in\{1,\ldots, d_i-1\}$
 \begin{equation*}
V^{ij}(x)=
K_s^{ij}+\sum_{\underset{l\neq i}{l=1}}^{L}\bigg(K_l^{ij}+ \sum_{k=1}^{d_l-1} K_{lk}^{ij} x^{lk}\bigg), 
\end{equation*}
with $$K_s^{ij}:=s^{ij}- s^{id_i}, \quad K_l^{ij}:=h^{il}_{jd_l}-h^{il}_{d_id_l} \text{ and } 
K_{lk}^{ij}:=h^{il}_{jk}-h^{il}_{jd_l}-h^{il}_{d_ik}+h^{il}_{d_id_l},$$ where 
$h^{il}_{jk}$ denotes the $j$th-row-$k$th-column entry of the block $H^{il}$ of $H$.
\end{prop}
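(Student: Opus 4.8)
The plan is to compute the gradient directly via the chain rule, treating the map $f$ as a change of variables that, at each locus $i$, records the $d_i-1$ free frequencies and appends the slack coordinate $\overline{x}^{id_i}=1-\sum_{j=1}^{d_i-1}x^{ij}$. The first step is to observe that the Jacobian of $f$ is extremely sparse: since $\overline{x}^{lk}$ depends on $x^{ij}$ only when $l=i$, we have $\partial f^{ik}/\partial x^{ij}=\delta_{jk}$ for $k\in\{1,\ldots,d_i-1\}$ and $\partial f^{id_i}/\partial x^{ij}=-1$, while all cross-locus derivatives vanish. Plugging this into the chain rule collapses the sum over the $n+L$ augmented coordinates to just two surviving terms, namely
\begin{equation*}
V^{ij}(x)=\frac{\partial \overline{V}}{\partial \overline{x}^{ij}}(f(x))-\frac{\partial \overline{V}}{\partial \overline{x}^{id_i}}(f(x)).
\end{equation*}

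Second, I would differentiate the quadratic form. Because $H$ is symmetric, $\nabla_{\overline{x}}\overline{V}(\overline{x})=s+H\overline{x}$, and using that the diagonal blocks $0^{ii}$ vanish, the $(i,j)$ component reads $s^{ij}+\sum_{l\neq i}\sum_{k=1}^{d_l}h^{il}_{jk}\overline{x}^{lk}$. Forming the difference above, the part coming from $s$ produces exactly $K_s^{ij}=s^{ij}-s^{id_i}$, while the interaction part becomes $\sum_{l\neq i}\sum_{k=1}^{d_l}(h^{il}_{jk}-h^{il}_{d_ik})\overline{x}^{lk}$, still written in the augmented coordinates.

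The final step is to re-express this in the original coordinates by substituting $\overline{x}^{lk}=x^{lk}$ for $k<d_l$ and $\overline{x}^{ld_l}=1-\sum_{k=1}^{d_l-1}x^{lk}$, then splitting the inner sum at $k=d_l$ and collecting terms. Writing $a_k:=h^{il}_{jk}-h^{il}_{d_ik}$, the substitution turns $\sum_{k=1}^{d_l}a_k\overline{x}^{lk}$ into $a_{d_l}+\sum_{k=1}^{d_l-1}(a_k-a_{d_l})x^{lk}$; one then identifies the constant coefficient $a_{d_l}=h^{il}_{jd_l}-h^{il}_{d_id_l}=K_l^{ij}$ and the coefficients $a_k-a_{d_l}=h^{il}_{jk}-h^{il}_{jd_l}-h^{il}_{d_ik}+h^{il}_{d_id_l}=K_{lk}^{ij}$, which assembles the claimed formula. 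The computation is elementary, so the only real difficulty is bookkeeping: keeping the two-index coordinate labels $(i,j)$ distinct from the block-entry indices in $h^{il}_{jk}$, and making sure the slack-variable substitution is applied consistently so that the constant terms $K_l^{ij}$ separate cleanly from the frequency-dependent terms $K_{lk}^{ij}$.
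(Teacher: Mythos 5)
Your proposal is correct and follows essentially the same route as the paper's proof: reduce $V^{ij}$ to the difference $\partial\overline{V}/\partial\overline{x}^{ij}-\partial\overline{V}/\partial\overline{x}^{id_i}$ via the sparse Jacobian of $f$, differentiate the quadratic form using the symmetry of $H$, and substitute the slack coordinate to separate the constants $K_l^{ij}$ from the coefficients $K_{lk}^{ij}$. The only cosmetic difference is that you drop the $l=i$ terms at the outset while the paper drops them at the end; the bookkeeping and the resulting identifications are identical.
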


Following Kimura's formulation \cite{Kimur1955}, the explicit stationary density of (\ref{eq:coupled_WF_general}) can 
also be obtained by solving the corresponding Fokker-Planck equation, see \cite{Aurell2019a}, 
whose solution takes the form
\begin{equation}\label{eq:stationary}
 P(x)=\dfrac{1}{Z}\pi(x)e^{2(\overline{V}\circ f)(x)},
\end{equation}
where
$$
\pi(x)=\prod_{i=1}^L \pi^i(x^i)=
\prod_{i=1}^L \left( (1-\sum_{j=1}^{d_i-1}x^{ij})^{\theta^i_{d_i}-1}\prod_{j=1}^{d_i-1}(x^{ij})^{\theta^i_j-1}\right)
$$
and $Z$ is the normalizing constant
$$
Z=\int_{x\in\mathcal{X}} \pi(x)e^{2(\overline{V}\circ f)(x)}dx.
$$
Here $\mathcal{X}=\{x\in\mathbf{R}^n\mid x^{ij}\geq 0, \sum_{j=1}^{d_i-1}x^{ij}\leq 1\}$.

The representation \eqref{eq:stationary} resembles the stationary density of the haploid version of the model studied by Fearnhead, see \cite{Fearnhead2006} Theorem 2, which in the two loci case and with vanishing within-locus selection, agrees with the coupled Wright-Fisher diffusion. 

\subsection{Overview of exact rejection algorithms for simulation of diffusions}
This subsection provides an overview on the exact rejection algorithm presented in \cite{Beskos2005}, and presents the same sampling scheme followed for simulating from the coupled Wright-Fisher diffusion, detailed further in Algorithm \ref{alg:CWFdiff}. Let 
\begin{equation}
 dX_t= \mu(X_t) dt+dB_t,\ X_0=x_0,\ t\in [0,T],\label{eq:1dim_diffusion}
\end{equation}
where $\mu(\cdot)$ is such that (\ref{eq:1dim_diffusion}) admits a unique 
weak solution $(X_t)_{t\in[0,T]}$.
Let $\mathbb{Q}_{x_0}$ be the law of the process $(X_t)_{t\in[0,T]}$ and 
let $\mathbb{P}_{x_0}$ denote the law of a Brownian 
motion $(B_t)_{t\in[0,T]}$ starting at $B_0=x_0$. By means of the 
Girsanov's transformation of measures one can write the Radon-Nykod\'{y}m derivative of 
$\mathbb{Q}_{x_0}$ w.r.t. $\mathbb{P}_{x_0}$
\begin{equation}\label{eq:Girsanov_1dim_diff}
 \dfrac{d\mathbb{Q}_{x_0}}{d\mathbb{P}_{x_0}}=\exp\left\{\int_0^T \mu(B_t) dX_t-\dfrac{1}{2}\int_0^T \mu^2(B_t)dt\right\}.
\end{equation}
Assuming that $\mu(\cdot)$ is differentiable everywhere and using It\^{o}'s lemma, (\ref{eq:Girsanov_1dim_diff}) can be rewritten as
\begin{equation}\label{eq:itolemma_1dim_diff}
 \dfrac{d\mathbb{Q}_{x_0}}{d\mathbb{P}_{x_0}}=\exp\left\{\tilde{A}(B_T)-\tilde{A}(x_0)\right\}\exp\left\{-\dfrac{1}{2}\int_0^T (\mu^2(B_t)+\mu'(B_t)) dt\right\},
\end{equation}
where  $\tilde{A}(x):=\int_0^x\mu(u)du$. Imposing the further conditions of $\tilde{A}(x)$ to be bounded above by a constant $K^A$, and $(\mu^2+\mu')/2$ 
to be bounded between constants $K^-$ and $K^+$, 
\begin{equation}\label{eq:rejprob_1dim_diff}
 \dfrac{d\mathbb{Q}_{x_0}}{d\mathbb{P}_{x_0}}\propto\exp\left\{\tilde{A}(B_T)-K^A\right\}
 \exp\left\{-\int_0^T (\tilde{\phi}(B_t) -K^- )dt\right\},
\end{equation}
with $\tilde{A}(x)\leq K^A$ and $K^-\leq \tilde{\phi}(x):=\dfrac{1}{2}[\mu^2(x)+\mu'(x)]\leq K^+$, is a suitable acceptance-rejection probability. 

Note that, a priori, an exact evaluation of the integral in (\ref{eq:rejprob_1dim_diff}) is not possible without any approximation error because it would require to store infinitely many points of the sample candidate path $B=(B_t)_{t\in[0,T]}$. However, the key point of the exact algorithms proposed in \cite{Beskos2005} and publications therein, lies on the fact that an event occurring with probability (\ref{eq:rejprob_1dim_diff}) can be 
evaluated sampling $B$ only at a finite number of time points. This follows because the last term in (\ref{eq:rejprob_1dim_diff}) can be interpreted as the probability of the event $\omega_{\tilde{\phi}}$
that no points from an homogeneous spatial Poisson process $\Phi=\{(t_j,\psi_j): j=1,\ldots,J\}$ with unit intensity on $[0,T]\times[0,K^+-K^-]$ lie below the graph of $t\mapsto\tilde{\phi}(B_t)$. A formal statement and proof of this observation can be found in Theorem 1 of \cite{Beskos2006}. 

Therefore, the exact sampling procedure (detailed in Algorithm \ref{alg:Beskos}) starts by drawing a sample from $\Phi$ that will determine the time points at which the candidate $B$ will be drawn, and then provides a skeleton of $(X_t)_{t\in[0,T]}$ at such time points upon acceptance of the sampled candidate (that is, if all the evaluated $\tilde{\phi}(B_t)$ lie above the sampled Poisson points). Note that the last point on the candidate path, $B_T$, serves to evaluate an event that occurs with probability $\exp\{\tilde{A}(B_T)-K^A\}$ and that is independent of $\omega_{\tilde{\phi}}$. In an original version of the algorithm, $B_T$ is sampled from a slightly modified distribution; a biased Brownian motion that serves as a valid candidate and improves the algorithm's efficiency. Such option is not needed for our purposes and is therefore not fully described here. In brief, this modification permits relaxing the boundedness condition on $\tilde{A}(x)$, but the equivalent function in our proposed exact algorithm is already bounded.
\begin{algorithm}
\caption{Exact algorithm for simulating skeletons of paths $(X_t)_{t\in[0,T]}$ of a diffusion process with law 
$\mathbb{Q}_{x_0}$}\label{alg:Beskos}
\algsetup{linenodelimiter=}
\begin{algorithmic}[1] 
    \STATE Simulate $\mathbf{\Phi}$, a Poisson process on $[0,T]\times[0, K^+-K^-]$.
    \STATE Simulate $U\sim \mbox{Uniform}(0,1)$
    \STATE Given $\Phi=\{(t_j,\psi_j): j=1,\ldots,J\}$, simulate $B\sim \mathbb{P}_{x_0}$ at times
    $\{t_1,\ldots,t_J\}$ and at time $T$.
    \IF{$\tilde{\phi}(B_{t_j})-K^-\leq \psi_j,\ \forall j$ and $U\leq \exp\{\tilde{A}(B_T)-K^A\}$}
        \RETURN $\{(t_j,B_{t_j}),\ \forall j\}\cup \{(T,B_T)\}$ 
    \ELSE \STATE Go back to Step 1. 
    \ENDIF
\end{algorithmic}
\end{algorithm}

\section{Simulation of the candidate processes}\label{sec:candidate}
This section is devoted to describing existing simulation strategies for the candidate processes in the exact rejection algorithm that will be presented later in Section \ref{sec:exact_alg_coupled}. Suitable candidate processes in our setting will be $L$ independent $(d_i-1)$-dimensional neutral Wright-Fisher diffusions $(X_t)_{t\in [0,T]}$, each one unique weak solution of 
\begin{equation}\label{eq:candidate_Mi}
dX_t= \alpha(X_t) dt+D^{\frac{1}{2}}dB_t,\quad X_0=x_0,\ t\in [0,T],
\end{equation}
with $\alpha(X_t)$ a $(d_i-1)$-dimensional vector with $\alpha^{ij}(x)=\frac{1}{2}(\theta^i_j-|\theta| x^{ij})$.

\subsection{Transition density function expansions}
Exact simulation of each neutral Wright-Fisher diffusion is possible by exploiting available transition density's eigenfunction expansions that allow a probabilistic representation, see, for example, \cite{Griffiths2010}. 

For a fixed locus $i\in\{1,\ldots, L\}$, let $x=(x_1,\ldots, x_{d-1})$ be a vector of initial frequencies and $\theta+l$ a 
$d$-dimensional vector with entries $\theta_j+l_j, \ j\in\{1,\ldots, d\}$. Then, the probabilistic representation of the transition density function of $(X_t)_{t\in [0,T]}$ in (\ref{eq:candidate_Mi}) is given by 
\begin{equation}\label{eq:mult_trans_den}
g(x,y;t)=\sum_{m=0}^{\infty} q_m^{\theta}(t) \sum_{\underset{|l|=m}{l}} \mathcal{M}_{m,x}(l)\mathcal{D}_{\theta+l}(y) ,
\end{equation}
where $q_m^{\theta}(t)$ are transition functions of a pure death process $A_{\infty}^{\theta}(t)$ with an 
entrance boundary at $\infty$, $\mathcal{M}_{m,x}(\cdot)$ is the probability mass function (PMF) of a multinomial random variable, and $\mathcal{D}_{\theta+l}(\cdot)$ the probability density function (PDF) 
of a Dirichlet random variable, that is,
$$
 \mathcal{M}_{m,x}(l)=\frac{m!}{\prod_{j=1}^{d}l_j!}(1-\sum_{j=1}^{d-1}x_j)^{l_{d}}
 \prod_{j=1}^{d-1}x_j^{l_j},
 $$
 and
 $$
 \mathcal{D}_{\theta+l}(y)=\frac{\Gamma( |\theta+l|)}{\prod_{j=1}^{d}\Gamma(\theta_j+l_j)}
 (1-\sum_{j=1}^{d-1}y_j)^{\theta_{d}+l_{d}-1}\prod_{j=1}^{d-1}y_j^{\theta_j+l_j-1},
 $$
 with $l_d=m-\sum_{j=1}^{d-1} l_j$. A more detailed description of the process $A_{\infty}^{\theta}(t)$ as well as an exact sampling technique are provided in detail later on. 
 
In case of one-dimensional ($d=2$) Wright-Fisher diffusions, the multivariate components on the mixture in (\ref{eq:mult_trans_den}) reduce to their one-dimensional counterparts, i.e., a binomial and a beta random variables respectively \cite{Griffiths2010}.

\begin{algorithm}
\caption{Exact simulation of samples from $g(x,\cdot\ ;t)$, transition density of the $(d-1)$-dimensional 
neutral Wright-Fisher diffusion with recursive mutation}\label{alg:multidim_neutralWF}
\algsetup{linenodelimiter=}
\begin{algorithmic}[1] 
    \STATE Simulate $M\sim A_{\infty}^{\theta}(t)$.
    \STATE Given $A_{\infty}^{\theta}(t)=m$, simulate $L\sim \text{Multinomial}(m,x)$.
    \STATE Given $L=(l_1,\ldots,l_{d-1})$, simulate $Y\sim\text{Dirichlet}(\theta+l)$
    \RETURN $Y=(y_1,\ldots, y_{d-1})$.
  \end{algorithmic}
\end{algorithm}

 A sampling strategy for $g(x,\cdot;t)$ is summarized in Algorithm \ref{alg:multidim_neutralWF}, 
 see  \cite{Griffiths1983} or \cite{Jenkins2017} for an analogous version in the one-dimensional case, 
 the latter also including a modification for the infinite-dimensional case, that is, 
 for Fleming-Viot diffusions. Once expressed in probabilistic terms and given the simplicity of 
Algorithm \ref{alg:multidim_neutralWF}, recovering samples from $g(x,\cdot;t)$ seems straightforward. 
However, sampling exactly from $q_m^{\theta}(t)$ poses some difficulty because it is only known in infinite 
series form. Previous approaches for simulating from approximated versions of $q_m^{\theta}(t)$ can be found 
in \cite{Griffiths1983}, \cite{Griffiths1984}, \cite{Griffiths2006} or \cite{Jewett2014}. 
In the next section, we review the exact simulation procedure presented in \cite{Jenkins2017}, which is 
the one used here.

\subsection{Exact simulation of the ancestral process $A_{\infty}^{\theta}$}\label{subsec:ancestral}
We describe here the sampling procedure for recovering exact samples of $q_m^{\theta}(t)$, transition functions of the aforementioned death process $A_{\infty}^{\theta}$. In more detail, let $\{A_n^{\theta}(t): t\geq 0\}$ be a pure death process on $\mathbb{N}$ such that $A_n^{\theta}(0)=n$ almost surely and with its only possible transition $m\to m-1$ occurring at rate $m(m+|\theta|-1)/2$ for each $m=1,\ldots,n$, that is, it represents the number of non-mutant lineages that coalesce backwards in time in the coalescent process with mutation. Then, let $q_m^{\theta}(t)=\lim_{n \to \infty}\Pr(A_n^{\theta}(t)=m)$.

For a more thorough interpretation of the transition density $g(x,\cdot, t)$ and its one-dimensional counterpart, it is worth noting that the expansion in (\ref{eq:mult_trans_den}) is derived via a duality principle for Markov processes \cite{Ethier2009}, that is, from the moment dual process of the Wright-Fisher diffusion, which is also a pure death process representing lineages backwards in time, see for example \cite{Etheridge2009} or \cite{Griffiths2010} for a complete derivation and details.

An expression for $q_m^{\theta}(t)$ starting from the entrance boundary at infinity is, see \cite{Griffiths1980},
\begin{equation}\label{eq:qm}
 q_m^{\theta}(t)=\sum^\infty_{i=0}(-1)^{i}b_{m+i}^{(t,\theta)}(m),
\end{equation}
where
\begin{equation}\label{eq:b_coefs}
b_{m+i}^{(t,\theta)}(m)=\frac{(|\theta|+2(m+i)-1)}{m!i!}
 \frac{\Gamma(|\theta|+2m+i-1)}{\Gamma(|\theta|+m)}e^{(m+i)(m+i+|\theta|-1)t/2}.
\end{equation}
As shown in \cite{Jenkins2017}, samples from $q_m^{\theta}(t)$ can be recovered exactly by means of a variant of the alternating series method, described in \cite{Devroye2006}, Chapter 4. 
In brief, the alternating series method would require the sequence of coefficients $b_{m+i}^{(t,\theta)}(m)$ to be decreasing in $i$ for each $m$, condition that is not always met here. Nonetheless, one can exploit that  there exists a finite $C_m^{(t,\theta)}$ such that for all $m$ and 
$i\geq C_m^{(t,\theta)}$ the sequence of coefficients $b^{(t,\theta)}_{m+i}(m)$ decreases monotonically as $i$ tends to $\infty$. 
More explicitly, there exists
\begin{equation}\label{eq:C_constant}
C_m^{(t,\theta)}=\inf\left\{i\geq 0 : (b^{(t,\theta)}_{m+i+1}(m)/b^{(t,\theta)}_{m+i}(m))<1 \right\}<\infty.
\end{equation}
Then, once $C_m^{(t,\theta)}$ is available, the remaining of the sequence of coefficients is ensured to be decreasing and the alternating series method can be applied. The following proposition summarizes the main properties of the bound $C_m^{(t,\theta)}$.
\begin{prop}\label{prop:bound_m}[Proposition 1 in \cite{Jenkins2017}]
 Let $b_{m+i}^{(t,\theta)}(m)$ be the coefficients defined in (\ref{eq:b_coefs}) and $C_m^{(t,\theta)}$ be as 
 in (\ref{eq:C_constant}). Then
 \begin{enumerate}
  \item[i)] $C_m^{(t,\theta)}<\infty$ for all $m$.
  \item[ii)] $b_{m+i}^{(t,\theta)}(m)\downarrow 0$ as $i\to\infty$ for all 
  $i\geq C_m^{(t,\theta)}$.
  \item[iii)] $C_m^{(t,\theta)}=0$ for all $m>D_{\epsilon}^{(t,\theta)}$, where
  \begin{equation}\label{eq:D_constant}
    D_{\epsilon}^{(t,\theta)}=
    \inf\left\{u\geq \left(\frac{1}{t}-\frac{|\theta|+1}{2}\right)\vee 0 :
   (|\theta|+2u-1)e^{u(u+|\theta|-1)t/2}<1-\epsilon \right\},
   \end{equation}
for $\epsilon\in[0,1)$.
 \end{enumerate}
\end{prop}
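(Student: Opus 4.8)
The plan is to reduce all three claims to the behaviour of a single object: the one-step ratio of consecutive coefficients. Writing $k:=m+i$ and cancelling factorials with $\Gamma(z+1)=z\Gamma(z)$, the ratio telescopes to
\begin{equation*}
\frac{b^{(t,\theta)}_{m+i+1}(m)}{b^{(t,\theta)}_{m+i}(m)}=\underbrace{\frac{(|\theta|+2k+1)(|\theta|+m+k-1)}{(|\theta|+2k-1)(k-m+1)}}_{=:R(k)}\;e^{-(2k+|\theta|)t/2}=:r(k),
\end{equation*}
where the exponent collapses because $(k+1)(k+|\theta|)-k(k+|\theta|-1)=2k+|\theta|$. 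I would establish this identity first by direct computation. For part (i) it then suffices to observe that $R(k)\to1$ as $k\to\infty$ while $e^{-(2k+|\theta|)t/2}\to0$, so $r(k)\to0$; in particular $r(k)<1$ for all large $k$, the infimum in (\ref{eq:C_constant}) runs over a non-empty set, and $C_m^{(t,\theta)}<\infty$. This part requires no monotonicity.

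The core is part (ii), for which I would show that $r$ is decreasing in $k$ on $\{k\ge m\}$. Monotonicity converts ``the first index with $r<1$'' into a genuine single downcrossing of the level $1$, so that $r(k)<1$ for every $k\ge m+C_m^{(t,\theta)}$; hence $b^{(t,\theta)}_{m+i}(m)$ decreases for all $i\ge C_m^{(t,\theta)}$, and convergence to $0$ follows from $r(k)\to0$. Since $\log r(k)=\log R(k)-(2k+|\theta|)t/2$, it is enough to prove $\tfrac{d}{dk}\log R(k)<t$. Differentiating,
\begin{equation*}
\frac{d}{dk}\log R(k)=\frac{2}{|\theta|+2k+1}-\frac{2}{|\theta|+2k-1}+\frac{1}{|\theta|+m+k-1}-\frac{1}{k-m+1},
\end{equation*}
the first two terms sum to a negative quantity and the last two are non-positive exactly when $|\theta|+m+k-1\ge k-m+1$, i.e.\ $|\theta|+2m-2\ge0$. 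Thus $\tfrac{d}{dk}\log R(k)<0<t$ whenever $m\ge1$ (for any $|\theta|>0$), which gives strict monotonicity of $r$. Equivalently, extending $b$ to a smooth function via its $\Gamma$-representation, $\log b$ is concave because the same condition reads $\psi'(|\theta|+m+k-1)\le\psi'(k-m+1)$, using that the trigamma function $\psi'$ is positive and decreasing.

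The step I expect to be the main obstacle is the boundary case $m=0$ with $|\theta|<2$, where $|\theta|+2m-2<0$ and $R$ may be briefly \emph{increasing} near small $k$, so global monotonicity can fail at the first step or two. There I would argue that $\log b$ is nevertheless eventually concave, since the trigamma difference is $O(k^{-2})$ and $(\log b)''(k)\to-t<0$, and combine this with a finite, explicit check at the smallest values of $k$ to exclude a spurious early downcrossing. This is the only place requiring care beyond the clean $m\ge1$ computation, and it is what secures the single-downcrossing conclusion in full generality.

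Finally, for part (iii) I would specialise the ratio to the diagonal $i=0$ (i.e.\ $k=m$), where $R(m)=|\theta|+2m+1$ and the ratio simplifies to $r(m)=(|\theta|+2m+1)\,e^{-(2m+|\theta|)t/2}$. Read as a function of a real variable $u$, $\log r(u)=\log(|\theta|+2u+1)-(2u+|\theta|)t/2$ is strictly concave with unique maximiser $u^\ast=\tfrac1t-\tfrac{|\theta|+1}{2}$, so $r$ is strictly decreasing for $u\ge u^\ast$; this is precisely the lower cut-off $\big(\tfrac1t-\tfrac{|\theta|+1}{2}\big)\vee0$ in (\ref{eq:D_constant}). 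As $D_\epsilon^{(t,\theta)}$ is the first $u$ past $u^\ast$ at which this one-step ratio drops below $1-\epsilon$, monotonicity beyond $u^\ast$ yields $r(m)<1-\epsilon\le1$ for every integer $m>D_\epsilon^{(t,\theta)}$, i.e.\ $b^{(t,\theta)}_{m+1}(m)<b^{(t,\theta)}_{m}(m)$, which is exactly $C_m^{(t,\theta)}=0$. Since $D_\epsilon^{(t,\theta)}$ is nondecreasing in $\epsilon$, the conclusion holds for all $\epsilon\in[0,1)$.
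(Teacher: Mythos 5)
First, a point of context: the paper does not prove this proposition at all — it is imported verbatim as Proposition 1 of \cite{Jenkins2017}, and Section \ref{sec:proofs} contains no argument for it — so there is no in-paper proof to compare you against. Judged on its own terms, your reduction to the one-step ratio $r(k)=R(k)e^{-(2k+|\theta|)t/2}$ is the right mechanism and the identity is computed correctly (you have silently repaired the sign of the exponent in (\ref{eq:b_coefs}), which as printed would make $q_m^{\theta}(t)$ divergent); part (i) is fine, and the diagonal specialisation $r(m)=(|\theta|+2m+1)e^{-(2m+|\theta|)t/2}$ with maximiser $u^\ast=\tfrac1t-\tfrac{|\theta|+1}{2}$ is the intended route to (iii). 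Do note, however, that the quantity printed in (\ref{eq:D_constant}) is $(|\theta|+2u-1)e^{u(u+|\theta|-1)t/2}$, which matches your $r(m)$ neither in prefactor nor in exponent (linear versus quadratic); with that formula taken literally your argument for (iii) does not go through, so you must state explicitly that you are reading (\ref{eq:D_constant}) as the diagonal ratio.

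The genuine gap is in part (ii), exactly where you flag it, and your proposed patch does not close it. For $m\ge1$ your monotonicity computation is complete, but for $m=0$ with $|\theta|<2$ the ratio $r$ really does increase over an initial stretch (e.g.\ $|\theta|=0.02$ gives $R(1)\approx0.03$, $R(2)\approx0.57$, so $r(2)/r(1)\approx 19\,e^{-t}>1$ for small $t$), and the length of that stretch is not uniformly bounded: one needs roughly $k\gtrsim\sqrt{2/t}$ before $\tfrac{d}{dk}\log R(k)\le 2/(k^2-1)$ falls below $t$, so ``a finite, explicit check at the smallest values of $k$'' is not available uniformly in $(t,\theta)$. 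Eventual concavity of $\log b$ is also insufficient on its own: it makes $\{k:r(k)\ge1\}$ an interval only from some point on, and does not exclude a re-crossing of the level $1$ between the first downcrossing and the onset of concavity, which is precisely what (ii) forbids. A clean repair is available from your own ratio: expanding,
\begin{equation*}
(|\theta|+2k+1)(|\theta|+m+k-1)-(|\theta|+2k-1)(k-m+1)=(|\theta|+2k)(|\theta|+2m-1),
\end{equation*}
so for $k\ge1$ (where both products are positive) the sign of $R(k)-1$ is constant, equal to the sign of $|\theta|+2m-1$. If $|\theta|+2m<1$ then $R(k)<1$ for every $k\ge1$ and, since one also checks $r(1)<r(0)$ directly, the downcrossing is unique; in the remaining window $1\le|\theta|+2m<2$ (necessarily $m=0$) one verifies $4(|\theta|+k-1)(k+1)-(2-|\theta|)\bigl[(|\theta|+2k)^2-1\bigr]=(|\theta|-1)\bigl[4k^2+4k|\theta|+|\theta|^2-|\theta|+2\bigr]\ge0$, i.e.\ $\tfrac{d}{dk}\log R(k)\le0$ on $k\ge1$, and monotonicity is restored. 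Without some such argument, (ii) remains unproved in the one case you yourself identify as the obstacle.
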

Property iii) in Proposition \ref{prop:bound_m} will be of interest later when proposing the exact sampling algorithm for $(d-1)$-dimensional Wright-Fisher bridges (Section \ref{sec:bridge}), where an explicit bound on $m$ is needed.

One can then recover exact samples from $q_m^{\theta}(t)$ because the terms $b^{(t,\theta)}_{m+i}(m)$ become monotonically smaller with increasing $i$, and for each $m$ there exist $k_m$, elements of $\tilde{k}\in\mathbf{R}^{M+1}$ (i.e., $\tilde{k}=\{k_m\}_{m=0}^M$) such that
$$
S^-_{\tilde{k}}(M):=\sum_{m=0}^M\sum^{2k_m+1}_{i=0}(-1)^{i}b_{m+i}^{(t,\theta)}(m)
\leq \sum_{m=0}^M q_m^{\theta}(t)
\leq \sum_{m=0}^M\sum^{2k_m}_{i=0}(-1)^{i}b_{m+i}^{(t,\theta)}(m)=:S^+_{\tilde{k}}(M).
$$
Because 
$$
\limsup_{\tilde{k} \to (\infty, \ldots, \infty)}S^-_{\tilde{k}}(M)= \Pr(A_{\infty}^{\theta}(t)\leq M)
\text{ and }
\liminf_{\tilde{k} \to (\infty, \ldots, \infty)}S^+_{\tilde{k}}(M)= \Pr(A_{\infty}^{\theta}(t)\leq M),
$$

and both $S^-_{\tilde{k}}(M)$ and $S^+_{\tilde{k}}(M)$ can be computed from finitely many terms, given 
$U\sim \text{Uniform}(0,1)$ we can find $\tilde{k}^0\in\mathbf{R}^{M+1}$ with elements $k^0_m$ such that
\begin{equation*}
k^0_m=\inf\left\{ k_m\in\mathbb{N}: S^-_{\tilde{k}}(M) > U \text{ or } 
S^+_{\tilde{k}}(M) < U\right\},
\end{equation*}
for each $m\in\{0,\ldots, M\}$.

Now, if $\tilde{k}^0$ is such that $S^-_{\tilde{k}^0}(M) > U$, standard inverse sampling provides 
\begin{equation}\label{eq:k_indexs}
\inf\left\{ M\in\mathbb{N}: \sum_{m=0}^M q_m^{\theta}(t) \geq S^-_{\tilde{k}^0}(M)> U\right\},
\end{equation}
with $M$ exactly distributed following $q_m^{\theta}(t)$. The sampling strategy will consist in exploring the summands in $S^-_{\tilde{k}}(M)$ and $S^+_{\tilde{k}}(M)$ through their respective indexes $m$ and $k_m$, until, for a given realization of $U$, condition (\ref{eq:k_indexs}) is satisfied. 

A complete simulation procedure is presented in Algorithm \ref{alg:deathprocess}, where several improvements mentioned in \cite{Jenkins2017} have been incorporated. Most notably, the variable $M$ is initialized at the nearest integer around the mean of a certain distribution (not necessarily at $0$) that serves as an estimate of the mode $\hat{q}_{\text{mod}}$ of $q_m^{\theta}(t)$, a modification that decreases
computation times substantially. Such initialization originates from an asymptotic approximation of the transition functions $q_m^{\theta}(t)$, that first appeared in \cite{Griffiths1984}, which states that as $t\to 0$, $A_{\infty}^{\theta}(t)$ converges to a normal distribution, that is, 
\begin{equation}\label{eq:asymp_aprox}
\frac{A_{\infty}^{\theta}(t)-\mu^{(t, \theta)}}{\sigma^{(t, \theta)}}\stackrel{\mathcal{D}}{\longrightarrow} \mathcal{N}(0,1)\text{ as } t\to 0,
\end{equation}
where $\mu^{(t, \theta)}=2\eta/t$,
$(\sigma^{(t, \theta)})^2=\begin{cases} \frac{2\eta}{t\beta^2}(\eta+\beta)^2
\left(1+\frac{\eta}{\eta+\beta}-2\eta\right), \quad \beta\neq 0\\ 
\frac{2}{3t}, \quad \beta= 0\end{cases}$, 
\\with $\eta=\beta/e^{\beta}-1$ for $\beta\neq 0$ or $\eta=1$ otherwise, $\beta=\frac{1}{2}(|\theta|-1)t$, and where $\stackrel{\mathcal{D}}{\longrightarrow}$ denotes convergence in distribution, see Theorem 1 in \cite{Jenkins2017}.
\begin{algorithm}
\caption{Exact simulation of samples from $q_m^{\theta}(t)$, transition functions of the ancestral process $A_{\infty}^{\theta}$}\label{alg:deathprocess}
\algsetup{linenodelimiter=}
\begin{algorithmic}[1] 
    \STATE Set $M \gets \hat{q}_{\text{mod}}, \tilde{k}\gets(0,\ldots,0), j\gets1$
    \STATE Simulate $U\sim \mbox{Uniform}(0,1)$
    \REPEAT
    \FORALL{$m\in\{0,\ldots, M\}$} 
      \STATE Set $k_m\gets \lceil C_m^{(t,\theta)}/2 \rceil$ 
    \ENDFOR
    \WHILE{$S^-_{\tilde{k}}(M)<U<S^+_{\tilde{k}}(M)$} 
      \STATE Set $\tilde{k}\gets \tilde{k}+(1,\ldots,1)$
    \ENDWHILE
    \IF{$S^-_{\tilde{k}}(M)>U$}
      \RETURN $M$
    \ELSIF{$S^+_{\tilde{k}}(M)<U$} 
      \STATE Set $M\gets \hat{q}_{\text{mod}}+(-1)^{j}\lceil\frac{j}{2}\rceil$
      \IF{$j$ odd}
      \STATE $\tilde{k}\gets (k_0,\ldots, k_M)$ 
     \ELSIF{$j$ even}
        \STATE $\tilde{k}\gets (\tilde{k}, 0, \ldots, 0)$
	\ENDIF
	\ENDIF
     \STATE Set $j\gets j+1$
    \UNTIL false
  \end{algorithmic}
\end{algorithm}

Note that this initialization is possible because exploring the summands 
in $S^-_{\tilde{k}}(M)$ and $S^+_{\tilde{k}}(M)$ does not require to follow any specific order. 
It is also worth mentioning that, when $M$ is initalized at $0$, the vector $\tilde{k}$ is updated 
increasingly, i.e., a new element of the vector is added at every new iteration where $M$ is increased 
one unit. In Algorithm \ref{alg:deathprocess}, however, $M$ is updated telescopically, that is, at each 
new iteration $j$, $M$ moves farther from $\hat{q}_{\text{mod}}$ by one unit alternatingly above or below. This in turn, entails updating the 
corresponding $M+1$ elements of $\tilde{k}$ accordingly, i.e., the number of elements might either increase or 
decrease at each iteration. For precise results on the complexity of Algorithm \ref{alg:deathprocess} 
and simulation performance we refer the reader to \cite{Jenkins2017}.

At this stage, Algorithm \ref{alg:deathprocess} can be used in step 1 of Algorithm \ref{alg:multidim_neutralWF} and an exact sampling procedure for $(d-1)$-dimensional neutral Wright-Fisher diffusions is completed. 

\section{Exact rejection algorithm for simulating coupled Wright-Fisher diffusions}\label{sec:exact_alg_coupled}
Let $X_t$ be the $n$-dimensional vector of allele frequencies satisfying (\ref{eq:coupled_WF_general}). 
Following the same scheme as Algorithm \ref{alg:Beskos} in Section \ref{sec:background}, 
Algorithm \ref{alg:CWFdiff} simulates exact skeletons of paths of 
coupled Wright-Fisher diffusions with $L$ loci and $d_i$ allele types each, $i\in\{1,\ldots,L\}$. 
Candidate processes in this case are $L$ independent $(d_i-1)$-dimensional neutral Wright-Fisher diffusions, 
each one unique weak solution of (\ref{eq:candidate_Mi}) and sampled following Algorithm 
\ref{alg:multidim_neutralWF}. 

The exact rejection algorithm proposed in this paper relies on the existence and characterization 
of the following acceptance-rejection probability, which is detailed in Theorem \ref{th:ESim_CWF} 
and whose proof is deferred to Section \ref{sec:proofs}.
\begin{thm}\label{th:ESim_CWF}
 Let $\mathbb{CWF}_{\alpha,G,x_0}$ be the law of $X$, the solution of
 (\ref{eq:coupled_WF_general}) and $\mathbb{WF}L_{\alpha,x_0}$ be the joint law of $L$ 
 independent $(d_i-1)$-dimensional neutral Wright-Fisher diffusions solution of (\ref{eq:candidate_Mi}), 
 $i\in\{1,\ldots, L\}$. 
 Then, the Radon-Nykod\'{y}m derivative of $\mathbb{CWF}_{\alpha,G,x_0}$ w.r.t. $\mathbb{WF}L_{\alpha,x_0}$ 
 is of the form
\begin{equation}\label{eq:RadonNykodym-CWF}
 \dfrac{d\mathbb{CWF}_{\alpha,G,x_0}}{d\mathbb{WF}L_{\alpha,x_0}}
 =\exp\left\{A(X_0,X_T)\right\}\exp\left\{-\int_0^T \phi(X_t) dt\right\}\\
\end{equation}
and there exist constants $A^-, A^+, C^-$ and $C^+$ such that $A(X_0,X_T)$ is 
bounded on $[0,1]^n\times[0,1]^n$ by $A^-\leq A(X_0,X_T)\leq A^+$ 
and $\phi(X_t)$ is bounded on $[0,1]^n$ by $C^-\leq\phi(X_t)\leq C^+$,
with
\begin{multline*}
 A(X_0,X_T):=\displaystyle\int_0^T V(X_t)\cdot dX_t= \sum_{i=1}^{L}\sum_{j=1}^{d_i-1} 
 \bigg(K^{ij}_s (X^{ij}_T-X^{ij}_0)
 +\sum_{l=1}^{L}K^{ij}_l (X^{ij}_T-X^{ij}_0)\\
 +\displaystyle\sum_{l=i+1}^{L}\sum_{k=1}^{d_l-1}  
 K^{ij}_{lk}\ (X^{ij}_TX^{lk}_T-X^{ij}_0X^{lk}_0)\bigg)
\end{multline*}
and
$$
\phi(X_t):=\dfrac{1}{2}\left[ (V(X_t))^T D(X_t) V(X_t)
      +2 (V(X_t))^T \alpha(X_t)\right].
$$
\end{thm}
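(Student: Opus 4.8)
The plan is to obtain \eqref{eq:RadonNykodym-CWF} through a Girsanov change of measure and then to reduce the stochastic integral in the exponent to a boundary term by exploiting the gradient structure of $V$ together with the block structure of $D$. The two steps are essentially independent: the first produces the formula for the density and identifies $\phi$, while the second establishes path-independence of $A$ and its closed form, after which boundedness is a one-line compactness argument.

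First I would apply Girsanov's theorem to the pair of diffusions \eqref{eq:coupled_WF_general} and \eqref{eq:candidate_Mi}, which share the diffusion coefficient $D^{1/2}$ and differ only in their drift through the coupling term $G=DV$. The key observation is that $G$ lies in the range of $D^{1/2}$, namely $G=D^{1/2}u$ with $u:=D^{1/2}V$, so the Girsanov exponent $\int_0^T u^T dB_t-\tfrac12\int_0^T|u|^2\,dt$ can be written without ever inverting the (degenerate) matrix $D$. Since $V$ is affine by Proposition \ref{prop:gradient} and the entries of $D$ are polynomials, $u$ is bounded on the compact state space $\mathcal{X}$, so Novikov's condition holds and the change of measure is legitimate. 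Using $D^{1/2}dB_t=dX_t-\alpha(X_t)\,dt$ gives $u^T dB_t=V^T(dX_t-\alpha\,dt)$ and $|u|^2=V^TDV$, so the exponent becomes $\int_0^T V(X_t)\cdot dX_t-\int_0^T \tfrac12[V^TDV+2V^T\alpha]\,dt$, identifying $\phi$ exactly as stated.

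The main work is to show that $A(X_0,X_T)=\int_0^T V(X_t)\cdot dX_t$ is in fact path-independent and equals the displayed closed form. Using the affine expression for $V^{ij}$ from Proposition \ref{prop:gradient}, I would split the integral into a linear part and a bilinear part. The linear part integrates trivially to $\sum_{i,j}(K_s^{ij}+\sum_l K_l^{ij})(X_T^{ij}-X_0^{ij})$. For the bilinear part the terms are $K_{lk}^{ij}\int_0^T X_t^{lk}\,dX_t^{ij}$ with $l\neq i$; here I would invoke the It\^o product rule, noting that the quadratic covariation $\langle X^{ij},X^{lk}\rangle$ vanishes for $i\neq l$ because $D$ is block diagonal across loci. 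Pairing the $(i,j,l,k)$ summand with the $(l,k,i,j)$ summand and using the symmetry $K_{lk}^{ij}=K_{ij}^{lk}$, which follows from the symmetry of $H$ (so that $h^{il}_{jk}=h^{li}_{kj}$), collapses each pair of stochastic integrals into the boundary term $K_{lk}^{ij}(X_T^{ij}X_T^{lk}-X_0^{ij}X_0^{lk})$. Summing over $l>i$ reproduces the stated expression for $A$.

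Finally, boundedness is immediate once the closed forms are available: $A$ is a quadratic polynomial in the coordinates of $X_0$ and $X_T$ with constant coefficients, hence continuous and therefore bounded on the compact set $\mathcal{X}\times\mathcal{X}\subset[0,1]^n\times[0,1]^n$; similarly $\phi=\tfrac12[V^TDV+2V^T\alpha]$ is a polynomial in the coordinates of $X_t$, continuous on the compact simplex $\mathcal{X}\subset[0,1]^n$, hence bounded. This furnishes the constants $A^-,A^+,C^-,C^+$. I expect the delicate points to be the Girsanov step in the presence of the degenerate diffusion coefficient---one must avoid inverting $D$ and instead exploit that the perturbation $G=DV$ is adapted to the range of $D^{1/2}$---and, within the path-independence argument, the simultaneous use of the vanishing cross-loci covariation and the coefficient symmetry to eliminate the genuinely path-dependent stochastic integrals.
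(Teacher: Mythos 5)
Your proposal is correct and follows essentially the same route as the paper: a Girsanov change of measure with kernel $D^{1/2}V$ (avoiding any inversion of the degenerate $D$), identification of $\phi$ via $D^{1/2}dB_t = dX_t - \alpha\,dt$, reduction of $\int_0^T V\cdot dX_t$ to a boundary term by pairing the bilinear summands through the symmetry $K^{ij}_{lk}=K^{lk}_{ij}$, and boundedness by continuity on the compact state space. The only difference is cosmetic: you make explicit the vanishing of the cross-loci quadratic covariation in the It\^o product rule, a point the paper's proof uses silently when writing $K^{ij}_{lk}X_t^{lk}dX^{ij}_t+K^{lk}_{ij}X_t^{ij}dX^{lk}_t=K^{ij}_{lk}\,d(X_t^{lk}X^{ij}_t)$.
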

Using Algorithm \ref{alg:multidim_neutralWF} in Step 3 (or in case $d_i=2$ for some $i$, 
the corresponding modification for one-dimensional diffusions), Algorithm \ref{alg:CWFdiff} 
returns an exactly simulated skeleton of the solution of (\ref{eq:coupled_WF_general}).

\begin{algorithm}
\caption{Exact rejection algorithm for simulating skeletons of the paths $(X_t)_{t\in[0,T]}$ 
of a diffusion process with law 
$\mathbb{CWF}_{\alpha, G, x_0}$}\label{alg:CWFdiff}
\algsetup{linenodelimiter=}
\begin{algorithmic}[1] 
    \STATE Simulate $\mathbf{\Phi}$, a Poisson process on $[0,T]\times[0,C^+-C^-]$
    \STATE Simulate $U\sim \mbox{Uniform}(0,1)$
    \STATE Given $\mathbf{\Phi}=\{(t_j,\psi_j): j=1,\ldots,J\}$, simulate $X\sim \mathbb{WF}L_{\alpha,x_0}$ 
    at times $\{t_1,\ldots,t_J, T\}$.
    \IF{$\phi(X_{t_j})-C^-\leq \psi_j,\ \forall j$ and $U\leq \exp\{A(X_0, X_T)-A^+\}$}
        \RETURN $\{(t_j,X_{t_j}),\ \forall j\}\cup \{(T,X_T)\}$
   \ELSE \STATE Go back to Step 1. 
    \ENDIF
\end{algorithmic}
\end{algorithm}
The algorithm's computational complexity can also be established, and is made precise in Proposition \ref{prop:complexity}, whose proof can be found in Section \ref{sec:proofs}. 
\begin{prop}\label{prop:complexity}
 Let $L$ be the number of loci, $M(t)$ denote the total number of coefficients 
 that must be computed in the implementation of Algorithm \ref{alg:deathprocess}, 
 where $t\in(0,T)$ is the time distance between two sampled skeleton points, and 
 let $N(T)$ denote the number of Poisson points required until the first skeleton in 
 Algorithm \ref{alg:CWFdiff} is accepted. Then, $\text{E}[LM(t)]<\infty$ 
 and $\text{E}[N(T)]<\infty$, and more specifically, there exists $\kappa>0$ such that 
 $$
 \text{E}[LM(t)]=o(t^{-(1+\kappa)}) \text{ as } t\to 0, \text{ and } 
 \text{E}[N(T)]\leq T(C^+-C^-)e^{T(C^+-C^-)+A^+-A^-}.
 $$
\end{prop}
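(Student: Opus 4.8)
The plan is to treat the two complexity bounds separately, since they concern different randomness sources: $N(T)$ counts rejection rounds of Algorithm \ref{alg:CWFdiff}, while $M(t)$ counts coefficient evaluations inside the alternating-series sampler of Algorithm \ref{alg:deathprocess}. I would begin with $\mathrm{E}[N(T)]$, which is the cleaner of the two. By Theorem \ref{th:ESim_CWF} the acceptance event factorises: a candidate skeleton is accepted precisely when no Poisson point of $\Phi$ on $[0,T]\times[0,C^+-C^-]$ falls below the graph $t\mapsto\phi(X_t)-C^-$ and, independently, when $U\le\exp\{A(X_0,X_T)-A^+\}$. Using the bounds $C^-\le\phi\le C^+$ and $A^-\le A\le A^+$, the first event has probability at least $\exp\{-T(C^+-C^-)\}$ (thinning a homogeneous Poisson process of intensity $1$ on a region of area $T(C^+-C^-)$) and the second at least $\exp\{A^--A^+\}$. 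Since each round is an independent Bernoulli trial, $N(T)$ is geometric and its mean is the reciprocal of the acceptance probability; multiplying the two lower bounds and inverting gives exactly $\mathrm{E}[N(T)]\le T(C^+-C^-)\,e^{T(C^+-C^-)+A^+-A^-}$, up to the elementary inequality $p\ge e^{-a}$ bounding $1/p\le e^{a}$ after writing the first factor as $(1-e^{-T(C^+-C^-)})^{-1}$ and using $1-e^{-a}\ge a e^{-a}$ for $a>0$.

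For $\mathrm{E}[LM(t)]$ I would leverage the analysis of Algorithm \ref{alg:deathprocess} from \cite{Jenkins2017} together with the asymptotic normality \eqref{eq:asymp_aprox}. The total coefficient count for a single locus decomposes as a sum over the explored values of $m$ of the number of alternating-series terms needed at that $m$, which by Proposition \ref{prop:bound_m} is controlled by $C_m^{(t,\theta)}$ plus the number of extra terms required to separate $S^-_{\tilde k}$ and $S^+_{\tilde k}$ across the uniform threshold. The key inputs are: (i) by part (iii) of Proposition \ref{prop:bound_m}, $C_m^{(t,\theta)}=0$ for all $m>D_\epsilon^{(t,\theta)}$, so the per-$m$ cost is bounded uniformly in $m$ beyond $D_\epsilon^{(t,\theta)}$; and (ii) the number of distinct $m$ visited is governed by the spread of $q_m^\theta(t)$ around its mode, which by \eqref{eq:asymp_aprox} has mean $\mu^{(t,\theta)}=2\eta/t$ and standard deviation $\sigma^{(t,\theta)}=O(t^{-1/2})$ as $t\to0$. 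I would show $M(t)$ is, with overwhelming probability, of the order of $\sigma^{(t,\theta)}$ visited values each carrying $O(1)$ terms, plus a contribution from $D_\epsilon^{(t,\theta)}$; examining \eqref{eq:D_constant} shows $D_\epsilon^{(t,\theta)}$ grows like $t^{-1}$ in the worst case, which dictates the leading behaviour.

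Collecting these estimates, one obtains a bound of the form $\mathrm{E}[M(t)]=o(t^{-(1+\kappa)})$ for a suitable $\kappa>0$: the dominant term comes from summing the $O(1)$ per-$m$ costs over the effective support of $q_m^\theta(t)$, whose width is $O(t^{-1})$ in $m$, while the tail contribution from terms with $C_m^{(t,\theta)}>0$ is controlled through the geometric decay guaranteed by part (ii) of Proposition \ref{prop:bound_m}. Multiplying by the fixed finite number of loci $L$ preserves the rate, giving $\mathrm{E}[LM(t)]=o(t^{-(1+\kappa)})$, and finiteness at fixed $t>0$ follows because every quantity involved ($D_\epsilon^{(t,\theta)}$, $C_m^{(t,\theta)}$, the mean and variance of the normal approximation) is finite. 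The main obstacle I anticipate is step (ii): making rigorous the passage from the \emph{distributional} approximation \eqref{eq:asymp_aprox} to a genuine bound on the \emph{expected number of indices $m$ actually explored} by the telescopic search in Algorithm \ref{alg:deathprocess}. The normal limit controls where $q_m^\theta(t)$ concentrates, but the algorithm may transiently probe indices in the tails before the alternating sums separate, so one must bound the expected excursion length of the telescopic index $j$; I would handle this by combining a concentration estimate for $q_m^\theta(t)$ (to bound the probability of exploring far-tail indices) with the uniform per-$m$ term bound, so that rare deep excursions contribute negligibly to the expectation and the rate $o(t^{-(1+\kappa)})$ survives.
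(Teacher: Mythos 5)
Your bound for $\mathrm{E}[N(T)]$ lands on the right expression, but the route you describe to get the prefactor $T(C^+-C^-)$ does not work. You correctly lower-bound the per-round acceptance probability by $\epsilon \ge e^{-T(C^+-C^-)+A^--A^+}$, but $N(T)$ counts Poisson \emph{points}, not rounds, so it is not geometric and its mean is not $1/\epsilon$. The missing factor $T(C^+-C^-)$ is the expected number of Poisson points drawn \emph{per round} (the area of $[0,T]\times[0,C^+-C^-]$ under unit intensity), and it enters via a Wald-type identity: writing $N(T)=\sum_{i=1}^I D_i$ with $I$ the (geometric) number of attempts and $D_i$ the Poisson count in attempt $i$, conditioning on $I$ and applying the law of iterated expectations gives $\mathrm{E}[N(T)]=\mathrm{E}[D]/\epsilon=T(C^+-C^-)/\epsilon$ (this is Proposition 3 of \cite{Beskos2006}, which the paper invokes). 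Your proposed patch — rewriting a factor as $(1-e^{-T(C^+-C^-)})^{-1}$ and using $1-e^{-a}\ge ae^{-a}$ — produces $e^{a}/a$, i.e.\ it \emph{divides} by $T(C^+-C^-)$ rather than multiplying by it, and in any case that expression has no role in the argument; this step needs to be replaced by the conditioning argument above.

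For $\mathrm{E}[LM(t)]$ you are attempting to re-derive from first principles a result the paper simply cites: Proposition 5(iv) of \cite{Jenkins2017} already gives $\mathrm{E}[M(t)]=o(t^{-(1+\kappa)})$ for a single run of Algorithm \ref{alg:deathprocess}, and the only new content in the present proposition is the trivial observation that running it for $L$ independent loci multiplies the expectation by the constant $L$, preserving the rate. Your sketch (normal approximation for the location and spread of $q_m^{\theta}(t)$, $D_\epsilon^{(t,\theta)}=O(t^{-1})$, uniform per-$m$ cost beyond $D_\epsilon^{(t,\theta)}$) is a reasonable outline of how that cited result is proved, but as you yourself note, the passage from the distributional limit \eqref{eq:asymp_aprox} to a bound on the \emph{expected number of indices actually visited by the telescopic search} is the hard step, and you leave it unresolved. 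As written, this half of your argument is a plan rather than a proof; either supply the concentration estimate you allude to, or do what the paper does and cite the result.
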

In summary, the complexity of Algorithm \ref{alg:CWFdiff} increases either as $t\to 0$, when the average number of coefficients to be computed in Algorithm \ref{alg:deathprocess} increases as $1/t$, or with increasing $T$, when the average number of Poisson points needed until acceptance increases exponentially. 

The latter is easily solvable, simply by considering shorter intervals $[t_{k-1}, t_k]$ such that 
$$\bigcup_{k=1}^K[t_{k-1}, t_k]=[0,T], \text{ with } t_0=0, t_K=T, \text{ and }t_{k-1}<t_{k}, \ \forall k\in\{1, \ldots, K\},$$
and then concatenating the accepted skeletons in each of them. To solve the problem when $t\to 0$, we followed the recommendation in \cite{Jenkins2017}, and whenever $t<0.05$, resort to the approximation in (\ref{eq:asymp_aprox}). 

While asymptotically, the algorithm's growth rate does not depend on the number of loci,
it is worth mentioning that with increasing $L$ the acceptance probability decreases, as there are a larger number of skeletons that need to be accepted simultaneously, which naturally affects the algorithm's feasibility. This will be clearer in the next section, where simulation results for examples with $L=2$ and $L=4$ are provided. 
As expected, the acceptance probability also decreases whenever the target diffusion differs more from the 
neutral Wright-Fisher candidate. This is exemplified in the next section, where 
results are shown for two coupled Wright-Fisher models with the same number of loci and mutation parameters, 
but different selective pairwise interactions.

\section{Numerical experiments}\label{sec:num_exp}
In the following, several implementations of Algorithm \ref{alg:CWFdiff} are shown along with their simulation results. 
 The examples below 
represent plausible network structures present in biological applications, and whose interaction parameters are of interest. Examples of these include frequency-dependent selection in population dynamics with applications to vaccine's interventions \cite{Corander2017}, genome-wide discovery of interdependent loci affecting
antibiotic resistance \cite{Schubert2019} or analysis of sequence data \cite{Gao2018}. 

Consider the case of two loci with two allele types each, i.e., $L=2$ and $d_1=d_2=2$. 
A particular example with one type allele interaction between loci with no within-locus selection reduces
(\ref{eq:coupled_WF_general}) to
\begin{equation}\label{eq:twodim}
  \begin{cases}
  dX_t^{11}=  \alpha^{11}(X_t^{11}) dt+X_t^{11}(1-X_t^{11})hX_t^{21}  dt+\sqrt{X_t^{11}(1-X_t^{11})}dB_t^1\\
  dX_t^{21}= \alpha^{21}(X_t^{21}) dt+ X_t^{21}(1-X_t^{21})hX_t^{11}  dt+\sqrt{X_t^{21}(1-X_t^{21})}dB_t^2
  \end{cases}
  \end{equation}
where the $\alpha^{i1}(\cdot)$ are as in (\ref{eq:drift_coupled_WF_general}), $B=(B_t^1,B_t^2)$ is a vector of independent Brownian motions, $H^{12}=H^{21}=\begin{pmatrix} h&0 \\ 0&0\end{pmatrix}$, and $H$ is $0$ everywhere else. 

In this case,
  $A(X_0,X_T):=h(X_T^{11}X_T^{21}-X_0^{11}X_0^{21})$ and
  \begin{equation*}
   \begin{multlined}[\textwidth]
\phi(X_t)=\frac{1}{2}\left(h^2 \left[ (X_t^{21})^2 X_t^{11}(1-X_t^{11}) + (X_t^{11})^2 X_t^{21}(1-X_t^{21})\right]\right.{}\\
     \left.{}+ 2h \left[X_t^{21}\alpha^{11}(X_t^{11}) + X_t^{11}\alpha^{21}(X_t^{21})\right]\right),  
  \end{multlined}
  \end{equation*}
and the bound constants were set to 
$A^+=h(1-x_0^{11}x_0^{21})$, $C^-=-\frac{h}{2}(|\theta^{1}|+|\theta^{2}|)$
and $C^+=\frac{h}{2}(\frac{h}{2}+\theta^{1}_{1}+\theta^{2}_{1})$, where $|\theta^i|=\theta_{1}^{i}+\theta_{2}^{i}$. 

Results of several simulation scenarios for sampled skeletons of paths solution of (\ref{eq:twodim}) are shown in Table 1 and Table 2, where the total length $T$ of the considered interval $[0,T]$, the initialization of the path $(x_0^{11}, x_0^{21})$, the average number of attempts (drawn skeletons) until acceptance, average number of Poisson points needed until acceptance, average number of coefficients computed in Algorithm \ref{alg:deathprocess}, the acceptance probability, and the number of approximations needed due to small $t$s in between drawn points of the skeleton and the average time in seconds per accepted path are reported.

As shown in Table \ref{tab:h01} and Table \ref{tab:h05}, the average number of coefficients needed in shorter intervals where the sampled Poisson points are more likely to be close to each other ($t\to 0$) are larger than for longer intervals, as was expected from the results presented in Proposition \ref{prop:complexity}. Also, the acceptance probabilities when $h=1$ drop to around half compared with the simulations when $h=0.1$. This is also expected, as the model with larger pairwise interaction parameter differs more from the candidate paths, so acceptance of the candidate becomes harder. This is also reflected in the average number of attempts, needed Poisson points and coefficients, which increase consistently in the case $h=1$. Running time also increases with increasing $T$. In the case for $h=1$ and $T=5$ the total running times became prohibitive.

\begin{table}
\centering
\caption{Table for $10,000$ sampled paths satisfying (\ref{eq:twodim}) with $h=0.1$ and $\theta^{1}_{1}=\theta^{1}_{2}=\theta^{2}_{1}=\theta^{2}_{2}=0.01$}
\label{tab:h01}
\begin{tabular}{cccccccc} 
\multicolumn{8}{c}{$h=0.1$}\\
\hline
$T$ & $(x_0^{11}, x_0^{21})$ & Att. & Poisson points &Coeffs& Approx&Acc prob&Time (s)\\
\hline
0.1 & (0.5, 0.5) & 1.07 &0.001 & 300.68 &0.001 &0.93 &0.146\\ 
0.1& (0.02, 0.8) & 1.11 & 0.001 &311.04 &0.001 & 0.90&0.149\\
1.0 & (0.5, 0.5) & 1.08 &0.004 &7.79 & 0.001&0.92& 0.001\\ 
1.0& (0.02, 0.8) & 1.08 & 0.004 &7.78 &0.001 &0.92& 0.001\\
5.0 & (0.5, 0.5) & 1.09 &0.029 &3.78 &0.002 &0.92& 0.001\\ 
5.0& (0.02, 0.8) & 1.12 & 0.031 &4.23 &0.004 &0.89& 0.001\\
\hline
\end{tabular}
\end{table}
\begin{table}
\centering
\caption{Table for $10,000$ sampled paths satisfying (\ref{eq:twodim}) with $h=1$ and $\theta^{1}_{1}=\theta^{1}_{2}=\theta^{2}_{1}=\theta^{2}_{2}=0.01$}
\label{tab:h05}
\begin{tabular}{cccccccc}  
\multicolumn{8}{c}{$h=1$}\\
\hline
$T$ & $(x_0^{11}, x_0^{21})$ & Att. & Poisson points &Coeffs& Approx&Acc prob &Time (s)\\
\hline
0.1 & (0.5, 0.5) & 2.11 & 0.059 &610.96 & 0.005&0.47 & 0.318\\
0.1 & (0.02, 0.8) &  2.69 &0.076 &778.90 & 0.005&0.37 & 0.393\\
1.0 & (0.5, 0.5) &  2.23 & 0.615 &90.19 & 0.005&0.45 & 0.058\\
1.0 & (0.02, 0.8) &  2.77 & 0.774 &111.10 & 0.001&0.36 & 0.063\\
\hline
\end{tabular}
\end{table}

In order to establish the correctness of Algorithm \ref{alg:CWFdiff} and with the aim of providing 
a qualitative comparison, samples from the paths of (\ref{eq:twodim}) at a large $T$ were compared with 
the corresponding stationary density, and this was done for different mutation parameters (see Figure 1 and 
Figure 2) showing satisfactory results. Note that the stationary density for (\ref{eq:twodim}) 
can be explicitly computed and its normalizing constant reads, see \cite{Aurell2019a}, 
\begin{equation}\label{eq:norm_constant}
Z=\frac{\Gamma(\theta^2_1)\Gamma(\theta^2_2)}{\Gamma(\theta^2_1+ \theta^2_2)}\sum_{n=0}^{\infty}\frac{(\theta^2_1)^{(n)}(2h)^n}{|\theta^2|^{(n)}n!} \frac{\Gamma(\theta^1_2)}{\Gamma(|\theta^1|+n)}\Gamma(\theta^1_1+n), 
\end{equation}
where $a^{(n)}=a(a+1)\ldots(a+n-1)$. For the qualitative comparisons in Figure 1 and Figure 2, the infinite sum is truncated at a sufficiently high $n$ that the remainder is negligible. For the parameters in Table \ref{tab:h01} and \ref{tab:h05} the truncation level is set to $n=70$. 

\begin{figure}\label{fig:histograms1}
 \centering
    \begin{tikzpicture}
        \begin{axis}[
            width=.40\textwidth,
            height=.40\textwidth,
            view={110}{30},
            colormap/blackwhite,
            axis x line = bottom,
            axis y line = left,
            axis z line = left,
            xtick = {0,0.5,1},
            ytick = {0,0.5,1},
            ztick = {0,0.05,0.1,0.15},
            zticklabels={},
            xlabel = {$x_1$},
            ylabel = {$x_2$},
            xmin=-0.01, xmax=1.01,
            ymin=-0.01, ymax=1.01,
        ]
            \addplot3[surf,mesh/cols=50,mesh/ordering=x varies,z buffer=sort,shader=interp,
            ] table {fig/h01_theta001_stationary_density.txt};
        \end{axis}
    \end{tikzpicture}
\input{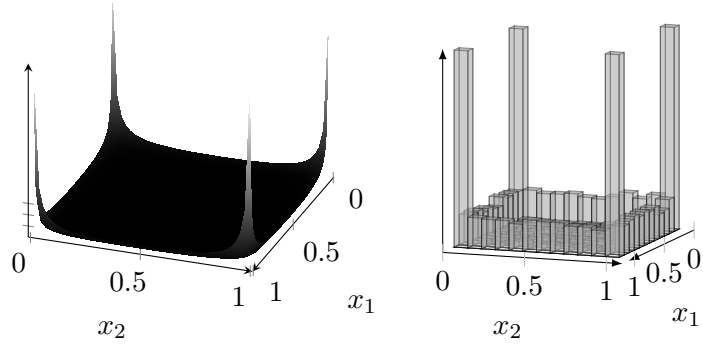}
\caption{Stationary distribution (left) and histogram of 10,000 samples (right) from $(X_t^{11},X_t^{21})$ satisfying (\ref{eq:twodim}) at $T=5$ with $(x_0^{11},x_0^{21})=(0.5, 0.5)$, $h=0.1$ and 
 $\theta^{i}_j=0.01$}
\end{figure}
\begin{figure}\label{fig:histograms2}
 \centering
    \begin{tikzpicture}
        \begin{axis}[
            width=.40\textwidth,
            height=.40\textwidth,
            view={110}{30},
            colormap/blackwhite,
            axis x line = bottom,
            axis y line = left,
            axis z line = left,
            xtick = {0,0.5,1},
            ytick = {0,0.5,1},
            ztick = {0,0.05,0.1,0.15},
            zticklabels={},
            xlabel = {$x_1$},
            ylabel = {$x_2$},
            xmin=-0.01, xmax=1.01,
            ymin=-0.01, ymax=1.01,
        ]
           \addplot3[surf,mesh/cols=50,mesh/ordering=x varies,z buffer=sort,shader=interp,
            ] table {fig/h01_theta1208_stationary_density.txt};
        \end{axis}
    \end{tikzpicture}
   \input{fig/Simulation_CWF_2x2_T5_x05_theta1208_h01.RData.txt.tex}
 \caption{Stationary distribution (left) and histogram of 10,000 samples (right) from $(X_t^{11},X_t^{21})$ satisfying (\ref{eq:twodim}) at $T=5$ with $(x_0^{11},x_0^{21})=(0.5, 0.5)$, $h=0.1$ and 
 $\theta^{1}_j=1.2,\theta^{2}_j=0.8$}
\end{figure}

Consider now the case of four loci with two allele types each, i.e., $L=4$ and $d_i=2$ for $i\in\{1,2,3,4\}$. A particular example with one type allele 
interaction between loci with no within-locus selection reduces
(\ref{eq:coupled_WF_general}) to

 \begin{equation}\label{eq:fourdim}
 \begin{dcases}
 \begin{multlined}[0.8025\textwidth]\textstyle
dX_t^{11}=\alpha^{11}(X_t^{11}) dt+X_t^{11}(1-X_t^{11})(h_1X_t^{21}+h_2X_t^{31}h_3X_t^{41}) dt+{}\\\textstyle
 {}+\sqrt{X_t^{11}(1-X_t^{11})}dB_t^1  
 \end{multlined}\\
  \textstyle dX_t^{21}=\alpha^{21}(X_t^{21}) dt+ X_t^{21}(1-X_t^{21})h_1X_t^{11}  dt+\sqrt{X_t^{21}(1-X_t^{21})}dB_t^2\\
  \textstyle dX_t^{31}=\alpha^{31}(X_t^{31}) dt+ X_t^{31}(1-X_t^{31})h_2X_t^{11}  dt+\sqrt{X_t^{31}(1-X_t^{31})}dB_t^3\\
  \textstyle dX_t^{41}= \alpha^{41}(X_t^{41}) dt+ X_t^{41}(1-X_t^{41})h_3X_t^{11}  dt+\sqrt{X_t^{41}(1-X_t^{41})}dB_t^4\\
\end{dcases}
\end{equation}
where the $\alpha^{i1}(\cdot)$ are as in (\ref{eq:drift_coupled_WF_general}), $B=(B_t^1,B_t^2,B_t^3,B_t^4)$ is a vector of independent Brownian motions, 
$$H^{12}=H^{21}=\begin{pmatrix} h_1&0 \\ 0&0\end{pmatrix},\, H^{13}=H^{31}=\begin{pmatrix} h_2&0 \\ 0&0\end{pmatrix} \text{ and }
H^{14}=H^{41}=\begin{pmatrix} h_3&0 \\ 0&0\end{pmatrix},$$ and $H$ is $0$ everywhere else. In this case,
\begin{equation*}
   \begin{multlined}[\textwidth]
 A(X_0,X_T)=h_1(X_T^{11}X_T^{21}-X_0^{11}X_0^{21}]+h_2[X_T^{11}X_T^{31}-X_0^{11}X_0^{31}]{}\\{}
 +h_3[X_T^{11}X_T^{41}-X_0^{11}X_0^{41})
\end{multlined}
\end{equation*}
 and
\begin{equation*}
   \begin{multlined}[\textwidth]
 \phi(X_t)=\frac{1}{2}([(h_1)^2X_t^{21}(1-X_t^{21})
 +(h_2)^2X_t^{31}(1-X_t^{31})+(h_3)^2X_t^{41}(1-X_t^{41})](X_t^{11})^2\\
 +(h_1X_t^{21}+h_2X_t^{31}+h_3X_t^{41})^2X_t^{11}(1-X_t^{11})+ 2[(h_1X_t^{21}+h_2X_t^{31}+h_3X_t^{41})\alpha^{11}(X_t^{11})\\
 +(h_1\alpha^{21}(X_t^{21})+h_2\alpha^{31}(X_t^{31})+h_3\alpha^{41}(X_t^{41}))X_t^{11}]),
   \end{multlined}
\end{equation*}
and the bound constants were set to 
\begin{align*}
 &\textstyle
A^+=h_1(1-x_0^{11}x_0^{21})+h_2(1-x_0^{11}x_0^{31})+h_3(1-x_0^{11}x_0^{41}),\\
&\textstyle C^-=-\frac{1}{2}(|\theta^1|(h_1+h_2+h_3)+|\theta^{2}|h_1+|\theta^{3}|h_2+|\theta^{4}|h_3) \text{ and }\\
&\textstyle C^+=\frac{1}{2}(\theta^{1}_{1}(h_1+h_2+h_3)
+\frac{(h_1+h_2+h_3)^2+(h_1)^2+(h_2)^2+(h_3)^2}{4}+h_1\theta^{1}_{2}+h_2\theta^{1}_{3}+h_3\theta^{1}_{4}).
\end{align*}
Similarly to the previous example with $L=2$ and interaction parameter $h=1$, model (\ref{eq:fourdim}) 
differs more from the candidate process, as say, a model with only one pairwise interaction parameter 
(that is, a model with, for example, $h_2=h_3=0$). This is clearly reflected in the average low acceptance 
probabilities, or equivalently, in the average number of attempts or simulated Poisson points needed until acceptance, see Table \ref{tab:h05_3}. Moreover, simulating model (\ref{eq:fourdim}) implicitly requires to simultaneously accept four candidate paths, which makes acceptance more difficult. Nonetheless, it is still feasible to use Algorithm \ref{alg:CWFdiff} in these scenarios, as other approximate simulation strategies would be affected by similar problems. 
\begin{table}
\centering
\caption{Table for $10,000$ sampled paths with $h_1=0.1, h_2=0.15, h_3=0.2$ and $\theta^{i}_{j}=0.2$}
\label{tab:h05_3}
\begin{tabular}{cccccccc} 
\multicolumn{8}{c}{$h_1=0.1, h_2=0.15, h_3=0.2$}\\
\hline
$T$ & $x_0^{i1}$ & Att. & Poisson points & Coeffs&Approx&Acc prob &Time (s)\\
\hline
0.1 & 0.5 & 1.45 & 0.054 &412.65 & 0.001 &0.69 & 0.402\\
1.0 & 0.5 & 2.08 & 0.811 & 112.60&0.005 &0.48 & 0.129\\ 
5.0 & 0.5 &9.96 & 19.648 & 881.37&0.004 &0.10&1.007 \\ 
\hline
\end{tabular}
\end{table}

\section{Simulation of multidimensional neutral Wright-Fisher bridges}\label{sec:bridge}
To complete our simulation scheme, this section presents an exact simulation technique for sampling from neutral 
$(d-1)$-dimensional Wright-Fisher bridges. As mentioned before, once Algorithm \ref{alg:CWFdiff}  recovers a skeleton 
of the desired coupled Wright-Fisher diffusion, the remaining of the path can be filled by sampling from the 
corresponding neutral Wright-Fisher bridges, with no further reference to the target distribution needed, see, for example, \cite{Beskos2006}.

Consider a $(d-1)$-dimensional Wright-Fisher bridge, between $x$ at time $0$ 
and $z$ at time $t$. Its transition density is given by, see \cite{Fitzsimmons1993},
\begin{equation}\label{eq:trans_multibridge}
g_{z,t}(x,y;s)=\frac{g(x,y;s)g(y,z;t-s)}{g(x,z;t)}, \quad 0<s<t,
\end{equation}
where $g(\cdot,\cdot;\cdot)$ is as in (\ref{eq:mult_trans_den}). The precise 
eigenfunction expansion for $g_{z,t}(x,\cdot;s)$ is provided in the following proposition, 
whose prove can be found in Section \ref{sec:proofs}.

\begin{prop}\label{prop:trans_multibridge}
Let $g_{z,t}(x,\cdot;s)$  be the transition density function of a $(d-1)$-dimensional Wright-Fisher bridge. Then, 
its eigenfunction expansion reads
 $$
 g_{z,t}(x,y;s)=\sum_{m=0}^{\infty}\sum_{n=0}^{\infty}
 \sum_{\underset{|l|=m}{l}}\sum_{\underset{|r|=n}{r}}
 p_{m,n,l,r}^{(x,z,s,t,\theta)}\mathcal{D}_{\theta+l+r}(y),
 $$
 with
 $$
 p_{m,n,l,r}^{(x,z,s,t,\theta)}=\frac{q_m^{\theta}(s)q_n^{\theta}(t-s)}{g(x,z;t)}
 \mathcal{M}_{m,x}(l)\mathcal{D}_{\theta+r}(z)
 \mathcal{DM}_{\theta+l;n}(r)
 $$
 where $\mathcal{DM}_{\theta+l;n}(\cdot)$ denotes the PMF of a Dirichlet-Multinomial random variable, with
 $$
 \mathcal{DM}_{\theta+l;n}(r)=\frac{n!\Gamma(|\theta+l|)}
 {\Gamma(|\theta+l+r|)}\prod_{j=1}^{d}\frac{\Gamma(\theta_j+l_j+r_j)}
 {l_j!\Gamma(\theta_j+l_j)}.
 $$
 \end{prop}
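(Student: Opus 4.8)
The plan is to substitute the eigenfunction expansion (\ref{eq:mult_trans_den}) for each of the two transition densities in the numerator of the bridge density (\ref{eq:trans_multibridge}) and then reorganise the resulting double series. Expanding $g(x,y;s)$ with summation indices $m,l$ (with $|l|=m$) and $g(y,z;t-s)$ with indices $n,r$ (with $|r|=n$), the numerator becomes
\[
g(x,y;s)\,g(y,z;t-s)=\sum_{m=0}^{\infty}\sum_{n=0}^{\infty}\sum_{|l|=m}\sum_{|r|=n} q_m^{\theta}(s)\,q_n^{\theta}(t-s)\,\mathcal{M}_{m,x}(l)\,\mathcal{D}_{\theta+r}(z)\,\bigl[\mathcal{D}_{\theta+l}(y)\,\mathcal{M}_{n,y}(r)\bigr].
\]
Here the only factor depending on the running variable $y$ is the bracketed product $\mathcal{D}_{\theta+l}(y)\,\mathcal{M}_{n,y}(r)$, since in the second density $y$ plays the role of the starting point and hence enters through the multinomial $\mathcal{M}_{n,y}(r)$. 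Because every factor is a probability mass or density and is therefore nonnegative, Tonelli's theorem licenses all the rearrangements of these (at most countable) sums, so no separate convergence argument is needed.

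The heart of the argument is the algebraic identity
\[
\mathcal{D}_{\theta+l}(y)\,\mathcal{M}_{n,y}(r)=\mathcal{DM}_{\theta+l;n}(r)\,\mathcal{D}_{\theta+l+r}(y),
\]
which is the Dirichlet--multinomial conjugacy relation. To verify it I would write out both $\mathcal{D}_{\theta+l}(y)$ and $\mathcal{M}_{n,y}(r)$ explicitly in terms of the coordinates $y_j$ and $1-\sum_{j=1}^{d-1}y_j$. The $y$-dependent monomials multiply by adding their exponents, collapsing into exactly the monomial of $\mathcal{D}_{\theta+l+r}(y)$; the leftover prefactor, built from ratios of Gamma functions and factorials, is precisely the normalising constant $\mathcal{DM}_{\theta+l;n}(r)$ once one matches $\Gamma$-arguments and uses $|r|=n$ so that $|\theta+l|+n=|\theta+l+r|$.

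Substituting this identity back, the $y$-dependence of every summand is carried solely by $\mathcal{D}_{\theta+l+r}(y)$, while the factor $q_m^{\theta}(s)\,q_n^{\theta}(t-s)\,\mathcal{M}_{m,x}(l)\,\mathcal{D}_{\theta+r}(z)\,\mathcal{DM}_{\theta+l;n}(r)$ depends only on $x,z,s,t,\theta$ and the indices. Dividing through by $g(x,z;t)$, which is a positive $y$-independent constant obtained from (\ref{eq:mult_trans_den}) evaluated at $(x,z;t)$, yields exactly the coefficients $p_{m,n,l,r}^{(x,z,s,t,\theta)}$ and the claimed expansion. I expect the conjugacy identity to be the main obstacle, essentially because of the bookkeeping of the $d$-th coordinate: by convention $l_d=m-\sum_{j<d}l_j$, $r_d=n-\sum_{j<d}r_j$ and $1-\sum_{j<d}y_j$ are determined by the free components, so the exponents and Gamma arguments must be tracked over all $d$ entries even though only $d-1$ are independent. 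It is worth noting in passing that the power-matching naturally produces $r_j!$ (rather than $l_j!$) in the denominator of $\mathcal{DM}_{\theta+l;n}(r)$, which is what makes $\sum_{|r|=n}\mathcal{DM}_{\theta+l;n}(r)=1$ and confirms that the coefficients are the Dirichlet--multinomial probabilities.
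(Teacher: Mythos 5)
Your proof is correct and follows essentially the same route as the paper: expand both factors of the numerator via (\ref{eq:mult_trans_den}), apply the Dirichlet--multinomial conjugacy identity $\mathcal{M}_{n,y}(r)\mathcal{D}_{\theta+l}(y)=\mathcal{DM}_{\theta+l;n}(r)\mathcal{D}_{\theta+l+r}(y)$ to isolate the $y$-dependence, and divide by $g(x,z;t)$. Your closing remark is also well taken: the power-matching does produce $r_j!$ in the denominator of $\mathcal{DM}_{\theta+l;n}(r)$ (as in the paper's own intermediate display), so the $l_j!$ appearing in the proposition statement is a typo.
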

Following the result in Proposition \ref{prop:trans_multibridge}, a sampling scheme for $g_{z,t}(x,y;s)$ is provided in Algorithm \ref{alg:multidim_WFbridge}.
\begin{algorithm}
\caption{Exact simulation of samples from $g_{z,t}(x,\cdot;s)$, transition density of the $(d-1)$-dimensional 
neutral Wright-Fisher bridge}\label{alg:multidim_WFbridge}
\algsetup{linenodelimiter=}
\begin{algorithmic}[1] 
    \STATE Simulate $(M,N,L,R)\sim \{p_{m,n,l,r}^{(x,z,s,t,\theta)}: (m,n,l,r) \in \mathbb{N}\times \mathbb{N}\times \mathbb{N}^d \times \mathbb{N}^d\}$.
    \STATE Given $(M,N,L,R)=(m,n,l,r)$, simulate $Y\sim \text{Dirichlet}(\theta+l+r)$.
    \RETURN $Y=(y_1,\ldots, y_{d-1})$.
  \end{algorithmic}
\end{algorithm}

Similarly to Step 1 in Algorithm \ref{alg:multidim_neutralWF}, sampling exactly from the 
discrete random variable with PMF $p_{m,n,l,r}^{(x,z,s,t,\theta)}$ is not straightforward. 
However, given the results obtained so far, it only remains to find how to evaluate $g(x,z;t)$ 
without approximation error, and the sampling strategy will be complete. As pointed out in 
\cite{Jenkins2017} for the one-dimensional case, note that evaluating $g(x,z;t)$ at $x$ and $z$ 
is a different problem than sampling from it. 

By (\ref{eq:mult_trans_den}) and (\ref{eq:qm}), one obtains 
\begin{equation}\label{eq:gxz}
 g(x,z;t)=
 \sum_{m=0}^{\infty} \sum^\infty_{i=0}(-1)^{i} c_{m+i,m}^{(x,z,t,\theta)}(m)=
 \sum_{m=0}^{\infty} \sum^\infty_{i=0}(-1)^{i}b_{m+i}^{(t,\theta)}(m)\text{E}[\mathcal{D}_{\theta+L_m}(z)],
 \end{equation}
where $L_m\sim\text{Multinomial}(m,x)$. 

As in Section \ref{sec:candidate}, the aim is to find monotonically converging bounds on 
$p_{m,n,l,r}^{(x,z,s,t,\theta)}$ so that the alternating series method can be applied.

Let 
$$d_{2m}=\sum_{i=0}^{m}c_{m+i,m-i}^{(x,z,t,\theta)}(m) \text{ and }
d_{2m+1}=\sum_{i=0}^{m}c_{m+1+i,m-i}^{(x,z,t,\theta)}(m), \text{ for } m=0,1,\ldots,
$$
which rearranging the terms in (\ref{eq:gxz}), gives the alternating series 
\begin{equation}\label{eq:d_seq}
g(x,z;t)=\sum_{m=0}^{\infty} (d_{2m} - d_{2m+1})= d_0-d_1+d_2-\ldots
\end{equation}
Indeed, it can be proved that the terms $(d_i)_{i\geq 0}$ are monotonically
decreasing from a certain threshold that is characterized in the following results, 
a required condition to apply the alternating series method.

First, note that the strategy presented in \cite{Jenkins2017} for the one-dimensional case applies here almost \emph{mutatis mutandis}, with the exception of the terms involving $\text{E}[\mathcal{D}_{\theta+L_m}(z)]$ 
which by an analogous (generalized) argument can be shown to decrease in $m$, 
as shown in the following lemma, proved in Section \ref{sec:proofs}.
\begin{lem}\label{lem:decreas_expec}
 Let $L_m\sim\text{Multinomial}(m,x)$. Then for all $m\in\mathbb{N}$
 $$
 \text{E}[\mathcal{D}_{\theta+L_{m+1}}(z)]\leq \tilde{K}^{(\theta,x,z)} \text{E}[\mathcal{D}_{\theta+L_m}(z)],
 $$
 where
\begin{multline}\label{eq:K_constant}
 \tilde{K}^{(\theta,x,z)}=
\left(\dfrac{|\theta|}{\theta_d}\left(1-\sum_{j=1}^{d-1}z_j\right)\vee \dfrac{2(1+|\theta|)}{1-\sum_{j=1}^{d-1}z_j}\right)\left(1-\sum_{j=1}^{d-1}x_j\right)\\
+\sum_{j=1}^{d-1}\left(\dfrac{|\theta|}{\theta_j}z_j\vee \dfrac{2(1+|\theta|)}{z_j}\right)x_j.
\end{multline}
\end{lem}

Now, similarly to (\ref{eq:C_constant}), the following bound is defined
\begin{equation}\label{eq:E_constant}
E^{(t,\theta)}=\inf\left\{m\geq 0 : 2j\geq C_{m-j}^{(t,\theta)} \text{ for all } j=0,\ldots, m\right\},
\end{equation}
and used in the next Proposition \ref{prop:decreasinm} that fully characterizes the bound on $m$. 
The proof is again deferred to Section \ref{sec:proofs}.
\begin{prop}\label{prop:decreasinm}
 Let the sequence $(d_i)_{i\geq 0}$ be as defined in (\ref{eq:d_seq}), and consider the bounds 
 $E^{(t,\theta)}$, 
 $D_{\epsilon}^{(t,\theta)}$ and $\tilde{K}^{(\theta,x,z)}$ as in (\ref{eq:E_constant}), 
 (\ref{eq:D_constant}), and (\ref{eq:K_constant}) respectively. Then, for $\epsilon\in(0,1)$
 $$
 d_{2m+2}<d_{2m+1}<d_{2m}
 $$
 for all $m\geq E^{(t,\theta)}\vee D_{\epsilon}^{(t,\theta)}\vee 2\tilde{K}^{(\theta,x,z)}/\epsilon$.
\end{prop}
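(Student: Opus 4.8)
The plan is to realise the alternating series (\ref{eq:d_seq}) as a genuine diagonal rearrangement of the double sum (\ref{eq:gxz}) and then to verify the two one-step comparisons $d_{2m+1}<d_{2m}$ and $d_{2m+2}<d_{2m+1}$ separately. Writing $E_{m'}:=\text{E}[\mathcal{D}_{\theta+L_{m'}}(z)]$ and collecting the terms of (\ref{eq:gxz}) along the diagonals indexed by $s=(m'+i')+m'$, each of which carries the constant sign $(-1)^{s}$, one obtains the closed form
$$
d_s=\sum_{m'=0}^{\lfloor s/2\rfloor} b_{s-m'}(m')\,E_{m'},\qquad s\geq 0 .
$$
First I would record this identity and note that $E_{m'}>0$ (the Dirichlet density is strictly positive at an interior $z$), so that all summands are nonnegative.

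For the first inequality, $d_{2m}$ and $d_{2m+1}$ run over the same range $m'=0,\dots,m$ and differ only in the lower index of $b$, which increases from $2m-m'$ to $2m+1-m'$. In the notation $b_{m'+i}(m')$ these are $i=2(m-m')$ and $i=2(m-m')+1$, so Proposition~\ref{prop:bound_m}(ii) gives $b_{2m+1-m'}(m')<b_{2m-m'}(m')$ as soon as $2(m-m')\geq C_{m'}^{(t,\theta)}$. Setting $j=m-m'$, this is exactly the requirement $2j\geq C_{m-j}^{(t,\theta)}$ for all $j=0,\dots,m$, which is the defining property of $E^{(t,\theta)}$; hence every term of $d_{2m}-d_{2m+1}$ is strictly positive for $m\geq E^{(t,\theta)}$ and $d_{2m+1}<d_{2m}$ follows.

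The second inequality is the crux. Going from $s=2m+1$ to $s=2m+2$ the floor jumps, so $d_{2m+2}$ acquires one \emph{new} summand at $m'=m+1$, namely $b_{m+1}(m+1)E_{m+1}$, while every old summand decreases. Thus
$$
d_{2m+1}-d_{2m+2}=\sum_{m'=0}^{m}\bigl[b_{2m+1-m'}(m')-b_{2m+2-m'}(m')\bigr]E_{m'}-b_{m+1}(m+1)E_{m+1},
$$
where the bracketed differences are again nonnegative for $m\geq E^{(t,\theta)}$ (and the $m'=m$ difference decreases once $C_m^{(t,\theta)}\leq 1$, i.e.\ once $m\geq D_\epsilon^{(t,\theta)}$). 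The difficulty is that the compensating sum consists of small \emph{differences} of consecutive coefficients whereas the subtracted quantity is a full positive term; I therefore plan to bound the sum below by its single $m'=m$ entry $[b_{m+1}(m)-b_{m+2}(m)]E_m$ and to control the new term by Lemma~\ref{lem:decreas_expec}, which gives $E_{m+1}\leq \tilde K^{(\theta,x,z)}E_m$. After dividing by $E_m>0$ the whole matter reduces to the purely analytic inequality
$$
b_{m+1}(m)-b_{m+2}(m)>\tilde K^{(\theta,x,z)}\,b_{m+1}(m+1).
$$

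Finally I would establish this last inequality by separating the two mechanisms that make it work. Factoring out $b_{m+1}(m)$, the left side equals $b_{m+1}(m)\bigl(1-b_{m+2}(m)/b_{m+1}(m)\bigr)$; for $m\geq D_\epsilon^{(t,\theta)}$ the relevant one-step decrease ratio $b_{m+2}(m)/b_{m+1}(m)$ is at most $1-\epsilon$, which is the quantitative bound that (\ref{eq:D_constant}) and Proposition~\ref{prop:bound_m}(iii) are designed to supply, so the left side is at least $\epsilon\,b_{m+1}(m)$. For the right side, a direct Gamma-function computation shows $b_{m+1}(m+1)/b_{m+1}(m)=\frac{|\theta|+2m}{(m+1)(|\theta|+m)}<\frac{2}{m+1}$, where strictness uses $|\theta|>0$, so $\tilde K^{(\theta,x,z)}b_{m+1}(m+1)<\frac{2\tilde K^{(\theta,x,z)}}{m+1}b_{m+1}(m)\leq \epsilon\,b_{m+1}(m)$ whenever $m\geq 2\tilde K^{(\theta,x,z)}/\epsilon$. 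Chaining these bounds yields strict inequality, hence $d_{2m+2}<d_{2m+1}$, on the stated range $m\geq E^{(t,\theta)}\vee D_\epsilon^{(t,\theta)}\vee 2\tilde K^{(\theta,x,z)}/\epsilon$. The main obstacle is precisely this quantitative step: the appearance of an entire new diagonal term must be absorbed, and it can only be beaten because shifting the multinomial index costs a factor $O(1/m)$ while the decrease ratio is held away from $1$ by $\epsilon$; the growth constant $\tilde K^{(\theta,x,z)}$ of the expectations is then swallowed by pushing $m$ past $2\tilde K^{(\theta,x,z)}/\epsilon$.
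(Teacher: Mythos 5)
Your proposal is correct and follows essentially the same route as the paper: the reduction of $d_{2m+1}<d_{2m}$ to the definition of $E^{(t,\theta)}$, and the isolation of the boundary terms into the single inequality $[b_{m+1}(m)-b_{m+2}(m)]\,\text{E}[\mathcal{D}_{\theta+L_m}(z)]>b_{m+1}(m+1)\,\text{E}[\mathcal{D}_{\theta+L_{m+1}}(z)]$, split into an $\epsilon$ part (via $m\geq 2\tilde K^{(\theta,x,z)}/\epsilon$, Lemma \ref{lem:decreas_expec}, and the ratio $\frac{|\theta|+2m}{(m+1)(|\theta|+m)}<\frac{2}{m+1}$) and a $(1-\epsilon)$ part (via $D_\epsilon^{(t,\theta)}$), is exactly the paper's argument, merely rewritten in the diagonal form $d_s=\sum_{m'}b_{s-m'}(m')\text{E}[\mathcal{D}_{\theta+L_{m'}}(z)]$.
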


Once the bound in Proposition \ref{prop:decreasinm} is established, exact simulation of 
$(d-1)$-dimensional Wright-Fisher bridges ($d>2$) is possible by setting
$$
\tilde{F}_{m,n,l,r}^{(s,t,\theta)}:= C_{m}^{(s,\theta)} \vee C_{n}^{(t-s,\theta)} \vee
E^{(t,\theta)}\vee D_{\epsilon}^{(t,\theta)}\vee 2\tilde{K}^{(\theta,x,z)}/\epsilon,
$$
which for $2u\geq \tilde{F}_{m,n,l,r}^{(s,t,\theta)}$ provides the monotonically 
converging bounds
$$
e_{m,n,l,r}(2u+1)<e_{m,n,l,r}(2u+3)<p_{m,n,l,r}^{(x,z,s,t,\theta)}< e_{m,n,l,r}(2u+2)
<e_{m,n,l,r}(2u),
$$
where
$$
e_{m,n,l,r}(u)= \frac{\sum^{u}_{i=0}(-1)^{i}b_{m+i}^{(s,\theta)}(m)
\sum^{u}_{i=0}(-1)^{i}b_{n+i}^{(t-s,\theta)}}{\sum^{u+1}_{i=0}(-1)^{i} d_i} 
\mathcal{M}_{m,x}(l)\mathcal{D}_{\theta+r}(z)\mathcal{DM}_{\theta+l;n}(r),
$$
see Proposition 4 in \cite{Jenkins2017}. 

To recover exact samples from 
$p_{m,n,l,r}^{(x,z,s,t,\theta)}$, consider the pairing bijective function 
$\Sigma:\mathbb{N}\rightarrow \mathbb{N}\times \mathbb{N}\times \mathbb{N}^d \times \mathbb{N}^d$
such that $\Sigma(j)=(m,n,l,r)$. Now, for each $j$ there exist $v_j$, 
elements of $\tilde{v}\in\mathbf{R}^{J+1}$ 
(i.e., $\tilde{v}=\{v_j\}_{j=0}^J$) such that
$$
R^-_{\tilde{v}}(J):=
\sum_{j=0}^J  e_{\Sigma(j)}(2v_j+1)
\leq \sum_{j=0}^J p_{\Sigma(j)}^{(x,z,s,t,\theta)}
\leq \sum_{j=0}^J  e_{\Sigma(j)}(2v_j):=R^+_{\tilde{v}}(J),
$$
providing an analogous setting as the one presented in Section \ref{sec:candidate}. 
The proposed exact sampling scheme can be found in the following Algorithm \ref{alg:pmf_multibridges}, 
see Algorithm 5 in \cite{Jenkins2017},
that we reproduce here for completeness. 

\begin{algorithm}
\caption{Exact simulation of samples from the discrete random variable with PMF 
$\{p_{m,n,l,r}^{(x,z,s,t,\theta)}: (m,n,l,r) \in \mathbb{N}\times \mathbb{N}\times \mathbb{N}^d \times \mathbb{N}^d\}$}\label{alg:pmf_multibridges}
\algsetup{linenodelimiter=}
\begin{algorithmic}[1] 
    \STATE Set $j \gets 0, v_0\gets 0, \tilde{v}\gets (v_0)$
    \STATE Simulate $U\sim \mbox{Uniform}(0,1)$
    \REPEAT
      \STATE Set $v_j\gets \lceil \tilde{F}_{\Sigma(j)}^{(s,t,\theta)}/2 \rceil$ 
    \WHILE{$R^-_{\tilde{v}}(j)<U<R^+_{\tilde{v}}(j)$} 
      \STATE Set $\tilde{v}\gets \tilde{v}+(1,\ldots,1)$
    \ENDWHILE
    \IF{$R^-_{\tilde{v}}(j)>U$}
      \RETURN $\Sigma(j)$
    \ELSIF{$R^+_{\tilde{v}}(j)<U$} 
     \STATE Set $\tilde{v}\gets (v_0,\ldots, v_j,0)$ 
     \STATE Set $j\gets j+1$
    \ENDIF
    \UNTIL false
  \end{algorithmic}
\end{algorithm}

Other approaches to exact simulation of one-dimensional Wright-Fisher bridges include that 
recently proposed 
in \cite{Griffiths2018} that restricts to the case where either  
$\theta_1$ or $\theta_2$ are 0, and one or both of $x$ and $z$ are $0$, which is not applicable here.

\section{Proofs}\label{sec:proofs}
\begin{proof} [Proof of Proposition \ref{prop:gradient}]
Let $V:\mathbf{R}^n \to \mathbf{R}^{n}$ be such that 
$
V:=\nabla_{x}(\overline{V}\circ f)(x)$ where recall that
$$
f^{ik}(x)=\overline{x}^{ik}=
\bigg\{
\begin{array}{cl}
 x^{ij},& j,k=\{1,\ldots, d_i-1\},\\
 1-\sum_{j=1}^{d_i-1}x^{ij},& k=d_i.\\
\end{array}
$$
Then $\forall \ i\in\{1,\ldots, L\},j\in\{1,\ldots,d_i-1\}$,
 \begin{equation*}
\begin{array}{ll}
V^{ij}(x)&=\displaystyle\frac{\partial (\overline{V}\circ f)(x)}{\partial x^{ij}}=
\displaystyle\frac{\partial}{\partial \overline{x}^{ij}}\overline{V}(\overline{x})-
\frac{\partial}{\partial \overline{x}^{id_i}}\overline{V}(\overline{x})\\
&=\displaystyle s^{ij} + \sum_{l=1}^{L} \sum_{k=1}^{d_l} h^{il}_{jk} \overline{x}^{lk} 
- s^{id_i}-\sum_{l=1}^{L}\sum_{k=1}^{d_l} h^{il}_{d_ik}\overline{x}^{lk},\\
&=\displaystyle s^{ij}- s^{id_i} + \sum_{l=1}^{L} \bigg(h^{il}_{jd_l}-h^{il}_{d_id_l} +
\sum_{k=1}^{d_l-1} (h^{il}_{jk}-h^{il}_{jd_l}-h^{il}_{d_ik}+h^{il}_{d_id_l}) x^{lk}\bigg)\\
&=\displaystyle K_s^{ij}+\sum_{l=1}^{L}\bigg(K_l^{ij} +\sum_{k=1}^{d_l-1} K_{lk}^{ij} x^{lk}\bigg)
=\displaystyle K_s^{ij}+\sum_{\underset{l\neq i}{l=1}}^{L}\bigg(K_l^{ij} +\sum_{k=1}^{d_l-1} K_{lk}^{ij} x^{lk}\bigg),
\end{array}
\end{equation*}
where 
the last equality holds because
whenever $i=l$, all entries of the blocks $H^{ii}$ are $0$.
\end{proof}

\begin{proof}[Proof of Theorem \ref{th:ESim_CWF}]
By definition of $A(\cdot)$, $\phi(\cdot)$, and $X_t$, 
\begin{align}\label{eq:Proof_RadonNykodym-CWF}
 &\exp\left\{A(X_0,X_T)\right\}\exp\left\{-\int_0^T \phi(X_t) dt\right\}=
 \\
 &=\exp\left\{\int_0^T V(X_t)\cdot dX_t
 -\int_0^T\dfrac{1}{2}\left[ (V(X_t))^T D(X_t) V(X_t)
      +2 (V(X_t))^T \alpha(X_t)\right]dt\right\}\nonumber\\
   &=\exp\left\{\int_0^T  D^{\frac{1}{2}}(X_t)V(X_t) \cdot dB_t
 -\int_0^T\dfrac{1}{2}\left[ (V(X_t))^T D(X_t) V(X_t) \right]dt\right\}.\nonumber    
\end{align}
Because $V(\cdot)$ and $D(\cdot)$ are 
continuous on $[0,1]^n$, there exist constants $C^-$ and $C^+$ such that
$$C^-\leq\dfrac{1}{2} \displaystyle V(X_t)^T D(X_t)V(X_t) \leq C^+, \quad \text{a.s.}$$
Consequently, 
\begin{equation*}\label{eg:novikov}
\displaystyle\int_0^T\dfrac{1}{2} (V(X_t))^T D(X_t) V(X_t)
      dt<\infty, \quad \text{a.s.,}
\end{equation*}
so Novikov's condition is fulfilled and (\ref{eq:Proof_RadonNykodym-CWF}) can be identified as a Girsanov transformation \cite{Karatzas1998} with Girsanov kernel $(V(X_t))^T D^{\frac{1}{2}}(X_t)$. 

Let $\mathbb{Q}$ be the probability measure with
\begin{equation*}
    \frac{d\mathbb{Q}}{d\mathbb{WF}L_{\alpha,x_0}} =\exp\left\{A(X_0,X_T)\right\}\exp\left\{-\int_0^T \phi(X_t) dt\right\}.
\end{equation*}
It follows that the law of $X$ under $\mathbb{Q}$ coincides with $\mathbb{CWF}_{\alpha,G,x_0}$. Indeed, by Girsanov's theorem 
\begin{equation*}
    \tilde{B}_t = B_t - \int_0^t D^{\frac{1}{2}}(X_s)V(X_s) ds,
\end{equation*}
is a $\mathbb{Q}$-Brownian motion and 
\begin{equation*}
    dX_t = [\alpha(X_t) + D(X_t)V(X_t)]dt+  D^{\frac{1}{2}}(X_t)d\tilde{B}_t \\
    =[\alpha(X_t) + G(X_t)] dt+  D^{\frac{1}{2}}(X_t)d\tilde{B}_t.
\end{equation*}

Now by Proposition \ref{prop:gradient}, and for $K_s,K_l, K_{lk}\in\mathbf{R}^n$
\begin{align*}
 &\displaystyle\int_0^T V(X_t)\cdot dX_t
 =\displaystyle\int_0^T
 \displaystyle \bigg( K_s+\sum_{l=1}^{L}\bigg(K_l +\sum_{k=1}^{d_l-1} K_{lk}X_t^{lk}\bigg)\bigg) \cdot dX_t\\
 =&\displaystyle \int_0^T K_s\cdot dX_t
 +\int_0^T\sum_{l=1}^{L} K_l\cdot dX_t
 +\int_0^T\sum_{l=1}^{L}\sum_{k=1}^{d_l-1}  
 K_{lk} X_t^{lk} \cdot dX_t\\
 =&\displaystyle \sum_{i=1}^{L}\sum_{j=1}^{d_i-1}\Bigg(\int_0^T K^{ij}_s dX^{ij}_t
 +\int_0^T \sum_{l=1}^{L}K^{ij}_l dX^{ij}_t
 +\displaystyle\int_0^T\sum_{l=i+1}^{L}\sum_{k=1}^{d_l-1}  
 K^{ij}_{lk}\ d(X_t^{lk}X^{ij}_t)\Bigg)\\
 \end{align*}
where the farmost right term comes from pairing terms of the form 
$$K^{ij}_{lk}X_t^{lk}dX^{ij}_t+K^{lk}_{ij}X_t^{ij}dX^{lk}_t=K^{ij}_{lk} d(X_t^{lk}X^{ij}_t),$$ and recalling that $K^{ij}_{lk}=K^{lk}_{ij}$, and $i\neq l$ prevents squared terms. Hence, 
\begin{multline*}
 \displaystyle\int_0^T V(X_t)\cdot dX_t
 =\displaystyle \sum_{i=1}^{L}\sum_{j=1}^{d_i-1} \bigg(K^{ij}_s (X^{ij}_T-X^{ij}_0)
 +\sum_{l=1}^{L}K^{ij}_l (X^{ij}_T-X^{ij}_0)\\
 +\displaystyle\sum_{l=i+1}^{L}\sum_{k=1}^{d_l-1}  
 K^{ij}_{lk}\ (X^{lk}_TX^{ij}_T-X^{lk}_0X^{ij}_0)\bigg).\\
 \end{multline*}
The fact that
\begin{multline*}
A(X_0,X_T):=\sum_{i=1}^{L}\sum_{j=1}^{d_i-1} \bigg(K^{ij}_s (X^{ij}_T-X^{ij}_0)
 +\sum_{l=1}^{L}K^{ij}_l (X^{ij}_T-X^{ij}_0)\\
 +\displaystyle\sum_{l=i+1}^{L}\sum_{k=1}^{d_l-1}  
 K^{ij}_{lk}\ (X^{ij}_TX^{lk}_T-X^{ij}_0X^{lk}_0)\bigg)
 \end{multline*}
is bounded follows immediately, concluding the proof.
\end{proof}

\begin{proof} [Proof of Proposition \ref{prop:complexity}]
Let $M(t)$ be the total number of coefficients that must be computed in Algorithm \ref{alg:deathprocess}, with 
$t\in(0,T)$ the time distance between two sampled skeleton points. 
By Proposition 5 $(iv)$ in \cite{Jenkins2017}, there exists a $\kappa>0$ such that 
$\text{E}[M(t)]=o(t^{-(1+\kappa)})$ as $t\to 0$,
and further random coefficients needed in Algorithm \ref{alg:multidim_neutralWF} do not add to the algorithms' 
complexity. Similarly, 
although our rejection scheme uses Algorithm \ref{alg:deathprocess} $L$ times,
$$
\text{E}[LM(t)]=L\text{E}[M(t)]=o(t^{-(1+\kappa)}),
$$
so the algorithm's complexity is proportional to $L$, but its growth rate as $t\to 0$ remains the same as with 
$L=1$.

Let now
$\epsilon:=d\mathbb{CWF}_{\alpha,G,x_0}/d\mathbb{WF}L_{\alpha,x_0}$  be 
the acceptance-rejection probability in Algorithm \ref{alg:CWFdiff}. Because $A^-\leq A(X_T)\leq A^+$ and 
$C^-\leq\phi(X_t)\leq C^+$,
\begin{multline*}
\epsilon
\propto\exp\left\{A(X_T)-A^+\right\}\exp\left\{-\int_0^T (\phi(X_t)- C^-)dt\right\}\\
\geq \exp\{-T(C^ +-C^-) + A^--A^+\}.
\end{multline*}
Let $D$ refer to the number of Poisson points needed to decide upon acceptance or rejection of a proposed path. Then, following Proposition 3 in \cite{Beskos2006}
$$
\text{E}[N(T)]=\text{E}[D]/\epsilon=T(C^+-C^-)/\epsilon\leq T(C^+-C^-)e^{T(C^+-C^-)+A^+-A^-},
$$
where the first equality follows from considering the expectation of the sum of all drawn Poisson points $\sum_{i=1}^I D_i$ over $I$ iterations of the algorithm until the first path is accepted. Conditioning first on $I$ and applying the law of iterated expectations, and then applying the law of total expectation, concludes the proof.
\end{proof}

\begin{proof} [Proof of Proposition \ref{prop:trans_multibridge}]
Let $g_{z,t}(x,y;s)$ be the transition density function of a $(d-1)$-dimensional Wright-Fisher bridge, between $x$ at 
time $0$ and $z$ at time $t$. By (\ref{eq:trans_multibridge}) and (\ref{eq:mult_trans_den})
\begin{multline*}
g_{z,t}(x,y;s)=\frac{g(x,y;s)g(y,z;t-s)}{g(x,z;t)}\\
=\frac{1}{g(x,z;t)}\sum_{m=0}^{\infty} q_m^{\theta}(s) 
\sum_{\underset{|l|=m}{l}} \mathcal{M}_{m,x}(l)\mathcal{D}_{\theta+l}(y)\sum_{n=0}^{\infty} q_n^{\theta}(t-s) \sum_{\underset{|r|=n}{r}} \mathcal{M}_{n,y}(r)\mathcal{D}_{\theta+r}(z)\\
=\sum_{m=0}^{\infty}\sum_{n=0}^{\infty}
\sum_{\underset{|l|=m}{l}}\sum_{\underset{|r|=n}{r}}\frac{q_m^{\theta}(s)q_n^{\theta}(t-s)}{g(x,z;t)}
\mathcal{M}_{m,x}(l)\mathcal{D}_{\theta+r}(z)
\mathcal{M}_{n,y}(r)\mathcal{D}_{\theta+l}(y).
\end{multline*}
By definition
 \begin{multline*}
\mathcal{M}_{n,y}(r)\mathcal{D}_{\theta+l}(y)=
\frac{n!}{\prod_{j=1}^{d}l_j!}(1-\sum_{j=1}^{d-1}y_j)^{l_{d}}\prod_{j=1}^{d-1}y_j^{r_j}\\
\times\frac{\Gamma(|\theta+l|)}{\prod_{j=1}^{d}\Gamma(\theta_j+l_j)}
 (1-\sum_{j=1}^{d-1}y_j)^{\theta_{d}+l_{d}-1}\prod_{j=1}^{d-1}y_j^{\theta_j+l_j-1}.
  \end{multline*}
Multiplying and dividing by
$
\dfrac{\Gamma(|\theta+l+r|)}{\prod_{j=1}^d \Gamma(\theta_j+l_j+r_j)}
$
and rearranging, shows that
 \begin{multline*}
 \frac{n!\Gamma(|\theta+l|)}{\Gamma(|\theta+l+r|)}
\prod_{j=1}^{d}\frac{\Gamma(\theta_j+l_j+r_j)}{\Gamma(\theta_j+l_j)r_j!}
\dfrac{\Gamma(|\theta+l+r|)}{\prod_{j=1}^d \Gamma(\theta_j+l_j+r_j)}
(1-\sum_{j=1}^{d-1}y_j)^{\theta_{d}+l_{d}+r_d-1}\\
\times\prod_{j=1}^{d-1}y_j^{\theta_j+l_j+r_j-1}
=\mathcal{DM}_{\theta+l;n}(r)\mathcal{D}_{\theta+l+r}(y),
 \end{multline*}
 Now, identifying the coefficients of $p_{m,n,l,r}^{(x,z,s,t,\theta)}$ the proof is complete.
\end{proof}

\begin{proof} [Proof of Lemma \ref{lem:decreas_expec}] 
For the sake of simplicity, we will first consider the case $d=3$ and show that by analogous arguments 
the results can be extended to the general $(d-1)$-dimensional case. 
First, note the indexes of the sum on the right hand side of
\begin{equation*}
\text{E}[\mathcal{D}_{\theta+L_{m}}(z)]=
\sum_{\underset{|l|=m}{l}}\Pr(L_{m}=l)\mathcal{D}_{\theta+l}(z),
\end{equation*}
can be seen as placed on the $(d-1)$-face of the $d$-simplex $\mathcal{S}_d=\{l\in\mathbf{R}^d\mid l_{j}\geq 0, 
\sum_{j=1}^{d}l_j=m\}$. For example, if $d=3$, we only need to consider indexes $l_1$ and $l_2$ 
(see Figure \ref{fig:simplex3d}). 
 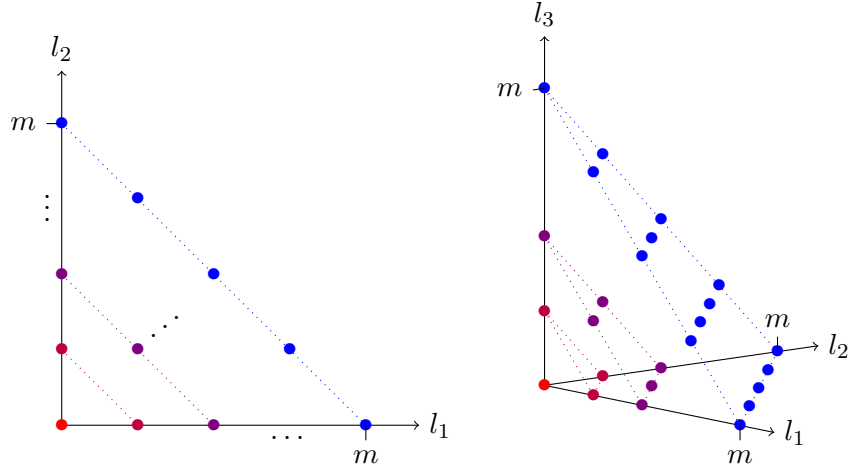
\begin{figure}
    \def\maxm{2} \def\maxaxs{4.7}
    \def\eps{.2}
    \tdplotsetmaincoords{80}{50}
    \begin{center}
    \begin{tabular}{cc}
        \begin{tikzpicture}[]
            \draw [->] (0,0) -- (\maxaxs,0) node[anchor = west]{$l_1$};
            \draw [->] (0,0) -- (0,\maxaxs) node[anchor = south]{$l_2$};
            \foreach \m in {0,...,\maxm}{
                \pgfmathsetmacro{\c}{100/(\maxm+2)*\m}
                \draw[color=blue!\c!red,dotted] (0,\m) -- (\m,0);
                \foreach \lone in {0,...,\m}{
                    \node[color=blue!\c!red] at (\lone,\m-\lone) {$\bullet$};
                }
            }
            \node[anchor=north] at (\maxm+1,0) {$\dots$};
            \node[anchor=south west] at (\maxm/2,\maxm/2) {
            $\iddots$};
            \node[anchor=east] at (0,\maxm+1) {$\vdots$};
            \pgfmathsetmacro{\m}{\maxm+2}
            \pgfmathsetmacro{\c}{100/(\maxm+2)*\m}
            \draw[color=blue!\c!red,dotted] (0,\m) -- (\m,0);
            \foreach \lone in {0,...,\m}{
                \node[color=blue!\c!red] at (\lone,\m-\lone) {$\bullet$};
            }
            \draw (\maxm+2,0) -- (\maxm+2,-\eps) node[anchor = north]{$m$};
            \draw (0,\maxm+2) -- (-\eps,\maxm+2) node[anchor = east]{$m$};
        \end{tikzpicture}
        &
        \begin{tikzpicture}[tdplot_main_coords]
            \draw [->] (0,0,0) -- (\maxaxs,0,0) node[anchor = west]{$l_1$};
            \draw [->] (0,0,0) -- (0,\maxaxs,0) node[anchor = west]{$l_2$}; 
            \draw [->] (0,0,0) -- (0,0,\maxaxs) node[anchor = south]{$l_3$};
            \pgfmathsetmacro{\lastm}{\maxm+2}
            \foreach \m in {0,...,\maxm,\lastm}{
                \pgfmathsetmacro{\c}{100/(\maxm+2)*\m}
                \draw[color=blue!\c!red,dotted] (0,\m,0) -- (\m,0,0);
                \draw[color=blue!\c!red,dotted] (0,0,\m) -- (\m,0,0);
                \draw[color=blue!\c!red,dotted] (0,0,\m) -- (0,\m,0);
                \foreach \l [evaluate=\l] in {0,...,\m}{
                    \pgfmathsetmacro{\lim}{\m-\l}
                    \foreach \p in {0,...,\lim}{
                        \node[color=blue!\c!red] at (\l,\p,\m-\l-\p) {$\bullet$};
                    }
                }
            }
            \draw (\maxm+2,0,0) -- ++ (0,0,-\eps) node[anchor = north]{$m$};
            \draw (0,\maxm+2,0) -- ++ (0,0,\eps) node[anchor = south]{$m$};
            \draw (0,0,\maxm+2) -- ++ (0,-\eps,0) node[anchor=east] {$m$};
        \end{tikzpicture}
    \end{tabular}
\end{center}
    \caption{\label{fig:simplex3d} Graphical representation of the $2$-dimensional case ($d=3$). 
    Each point in the figure represents an element of the sum in (\ref{eq:multidim_sum}). For every and up to each $m$, 
    all points depicted in the plane $l_1-l_2$ (left) correspond to points depicted in the $m$th $(l_1-l_2-l_3)$-face (right), 
    which in turn represent each and every one of the summands in (\ref{eq:multidim_sum}).}
\end{figure}
Thus,
\begin{equation}\label{eq:multidim_sum}
\text{E}[\mathcal{D}_{\theta+L_{m}}(z)]=\sum_{l_1=0}^{m}\sum_{l_2=0}^{m-l_1}
\Pr(L_{m}=l)\mathcal{D}_{\theta+l}(z).
\end{equation}

Let us define the quantities
$$
\mathcal{Q}_{m}:=\Pr(L_{m}=l)\mathcal{D}_{\theta+l}(z) \text{ and } \mathcal{Q}_{m+1}^j:=\Pr(L_{m+1}=\tilde{l})\mathcal{D}_{\theta+\tilde{l}}(z),
$$
where $\tilde{l}\in\mathbf{R}^d$ is equal to $l$ except in its $j$th component where $\tilde{l}_j=l_j+1$. As a special case, when $j=d$, one can write $\tilde{l}_d=m-\sum_{j=1}^{d-1}l_j+1$.

Then, for $l_1\leq\lfloor mz_1 \rfloor, l_2\leq\lfloor mz_2 \rfloor$,
\begin{align}\label{eq:del1}
\mathcal{Q}_{m+1}^d &= 
\frac{m+1}{m+1-l_1-l_2}\frac{|\theta|+m}{\theta_3+m-l_1-l_2}(1-x_1-x_2)(1-z_1-z_2)\mathcal{Q}_{m}\\\nonumber
&\leq \frac{m+1}{1+m(1-z_1-z_2)}\frac{|\theta|+m}{\theta_3+m(1-z_1-z_2)}(1-x_1-x_2)(1-z_1-z_2)\mathcal{Q}_{m}\\\nonumber
&\leq \left(\dfrac{|\theta|}{\theta_3}\vee \frac{m^2+m(1+|\theta|)+|\theta|}{m^2(1-z_1-z_2)^2}\right)(1-x_1-x_2)(1-z_1-z_2)\mathcal{Q}_{m}\\
&\leq \left(\dfrac{|\theta|}{\theta_3}(1-z_1-z_2)\vee \dfrac{2(1+|\theta|)}{1-z_1-z_2}\right)(1-x_1-x_2)\mathcal{Q}_{m}\nonumber,
\end{align}
where from the second inequality, note that the function $g(m):=
\frac{m^2+m(1+|\theta|)+|\theta|}{m^2(1-z_1-z_2)^2}$ is decreasing in $m$, and thus for $m\geq 1$, it attains its maximum value 
at $g(1)$. Then, if $f(m):= \frac{m+1}{1+m(1-z_1-z_2)}\frac{|\theta|+m}{\theta_3+m(1-z_1-z_2)}$, one obtains $f(m)\leq f(0)\vee g(m)\leq f(0)\vee g(1)$ 
yielding the desired result.

Similarly, for $l_1\leq\lfloor mz_1 \rfloor, \lfloor mz_2 \rfloor\leq l_2$,
\begin{align}\label{eq:del2}
\mathcal{Q}_{m+1}^2 &= 
\frac{m+1}{l_2+1}\frac{|\theta|+m}{\theta_2+l_2}x_2z_2\mathcal{Q}_{m}
\leq \frac{m+1}{mz_2+1}\frac{|\theta|+m}{\theta_2+mz_2}x_2z_2\mathcal{Q}_{m}\\\nonumber&
\leq \left(\dfrac{|\theta|}{\theta_2}\vee\frac{m^2+m(1+|\theta|)+|\theta|}{m^2z_2^2}\right)x_2z_2\mathcal{Q}_{m}
\leq \left(\dfrac{|\theta|}{\theta_2}z_2\vee \dfrac{2(1+|\theta|)}{z_2}\right)x_2\mathcal{Q}_{m},
\end{align}
and for $\lfloor mz_1 \rfloor\leq l_1$ and all $l_2$,
\begin{align}\label{eq:del3}
\mathcal{Q}_{m+1}^1 &= 
\frac{m+1}{l_1+1}\frac{|\theta|+m}{\theta_1+l_1}x_1z_1\mathcal{Q}_{m}
\leq \frac{m+1}{mz_1+1}\frac{|\theta|+m}{\theta_1+mz_1}x_1z_1\mathcal{Q}_{m}\\\nonumber&
\leq \left(\dfrac{|\theta|}{\theta_1}\vee\frac{m^2+m(1+|\theta|)+|\theta|}{m^2z_1^2}\right)x_1z_1\mathcal{Q}_{m}
\leq \left(\dfrac{|\theta|}{\theta_1}z_1\vee \dfrac{2(1+|\theta|)}{z_1}\right)x_1\mathcal{Q}_{m}.
\end{align}

Combining the inequalities in (\ref{eq:del1}), (\ref{eq:del2}) and (\ref{eq:del3}), 
\begin{align*}
&\text{E}[\mathcal{D}_{\theta+L_{m+1}}(z)]=\sum_{l_1=0}^{\lfloor mz_1 \rfloor}\sum_{l_2=0}^{\lfloor mz_2 \rfloor}
\mathcal{Q}_{m+1}^d + \sum_{l_1=0}^{\lfloor mz_1 \rfloor}\sum_{l_2=\lfloor mz_2\rfloor+1}^{m+1-l_1} \mathcal{Q}_{m+1}^2+ \sum_{l_1=\lfloor mz_1 \rfloor+1}^{m+1}\!\!\sum_{l_2=0}^{m+1-l_1} \mathcal{Q}_{m+1}^1\\
&\leq \left(\dfrac{|\theta|}{\theta_3}(1-z_1-z_2)\vee \dfrac{2(1+|\theta|)}{1-z_1-z_2}\right)(1-x_1-x_2)\sum_{l_1=0}^{\lfloor mz_1 \rfloor}\sum_{l_2=0}^{\lfloor mz_2 \rfloor} \mathcal{Q}_{m}\\
&+\left(\dfrac{|\theta|}{\theta_2}z_2\vee \dfrac{2(1+|\theta|)}{z_2}\right)x_2\sum_{l_1=0}^{\lfloor mz_1 \rfloor}\sum_{l_2=\lfloor mz_2 \rfloor}^{m-l_1} \mathcal{Q}_{m}\\
&+\left(\dfrac{|\theta|}{\theta_1}z_1\vee \dfrac{2(1+|\theta|)}{z_1}\right)x_1\sum_{l_1=\lfloor mz_1 \rfloor}^{m}\sum_{l_2=0}^{m-l_1} \mathcal{Q}_{m}\\
&\begin{multlined}[0.955\textwidth]\textstyle
 \leq\left[\left(\dfrac{|\theta|}{\theta_3}(1-z_1-z_2)\vee \dfrac{2(1+|\theta|)}{1-z_1-z_2}\right)(1-x_1-x_2)\right.\\
\textstyle{}+\left.\displaystyle\sum_{j=1}^2\left(\dfrac{|\theta|}{\theta_j}z_j\vee \dfrac{2(1+|\theta|)}{z_j}\right)x_j\right]
\text{E}[\mathcal{D}_{\theta+L_{m}}(z)],
\end{multlined}
\end{align*}
where in the first inequality the terms of the sums starting in $l_j=\lfloor mz_j \rfloor+1$ are shifted by one index, 
and the last inequality holds after taking common factors and noting that the terms for 
$l_j=\lfloor mz_j \rfloor$ are bounded by both 
$$\left(\dfrac{|\theta|}{\theta_3}(1-z_1-z_2)\vee \dfrac{2(1+|\theta|)}{1-z_1-z_2}\right)(1-x_1-x_2)
\text{ and }\sum_{j=1}^2\left(\dfrac{|\theta|}{\theta_j}z_j\vee \dfrac{2(1+|\theta|)}{z_j}\right)x_j.$$

The proof for the general $(d-1)$-dimensional case follows analogously. Consider
\begin{equation*}
\text{E}[\mathcal{D}_{\theta+L_{m}}(z)]=\sum_{l_1=0}^{m}\sum_{l_2=0}^{m-l_1}\ldots
\sum_{l_{d-1}=0}^{m-|l|_{d-2}}\Pr(L_{m}=l)\mathcal{D}_{\theta+l}(z),
\end{equation*}
where $|l|_{d-2}=\sum_{j=1}^{d-2} l_j$.

Following the strategy used above, the sums can be partitioned in terms such that either 
\begin{itemize}
 \item [a)] $l_j\leq \lfloor mz_j \rfloor, \forall \ j$,
 \item [b)] $\lfloor mz_j \rfloor\leq l_j, j\neq 1$ and $l_i\leq \lfloor mz_i \rfloor, \forall i\neq j$ or
 \item [c)] $\lfloor mz_1 \rfloor\leq l_1$ and $l_i, \forall i\neq 1$ free.
\end{itemize}
Note that this partition covers all (non exclusive) combinations and includes all the elements of the sum. Now, comparing 
$\text{E}[\mathcal{D}_{\theta+L_{m+1}}(z)]$ with $\text{E}[\mathcal{D}_{\theta+L_{m}}(z)]$, 
the bounding constants for each case are
$$
\text{a)} \left(\dfrac{|\theta|}{\theta_d}\left(1-\sum_{j=1}^{d-1}z_j\right)\vee \dfrac{2(1+|\theta|)}{1-\sum_{j=1}^{d-1}z_j}\right)\left(1-\sum_{j=1}^{d-1}x_j\right),
\text{b)} \left(\dfrac{|\theta|}{\theta_j}z_j\vee \dfrac{2(1+|\theta|)}{z_j}\right)x_j,$$
and
$$
\text{c)}\left(\dfrac{|\theta|}{\theta_1}z_1\vee \dfrac{2(1+|\theta|)}{z_1}\right)x_1,
$$
yielding
$$
\text{E}[\mathcal{D}_{\theta+L_{m+1}}(z)]\leq \tilde{K}^{(\theta,x,z)} \text{E}[\mathcal{D}_{\theta+L_{m}}(z)],
$$
with
\begin{multline}
 \tilde{K}^{(\theta,x,z)}=
\left(\dfrac{|\theta|}{\theta_d}\left(1-\sum_{j=1}^{d-1}z_j\right)\vee \dfrac{2(1+|\theta|)}{1-\sum_{j=1}^{d-1}z_j}\right)\left(1-\sum_{j=1}^{d-1}x_j\right)\\
+\sum_{j=1}^{d-1}\left(\dfrac{|\theta|}{\theta_j}z_j\vee \dfrac{2(1+|\theta|)}{z_j}\right)x_j.
\end{multline}
\end{proof}
\begin{proof} [Proof of Proposition \ref{prop:decreasinm}]
The proof follows from that of Proposition 3 in \cite{Jenkins2017}, which is reproduced here for completeness. 

The inequality $d_{2m+1}<d_{2m}$ follows because if $m\geq E^{(t,\theta)}$ then $2j\geq C_{m-j}^{(t,\theta)}$ for all $j=0,\ldots, m$, which, by Proposition \ref{prop:bound_m}, implies 
$b^{(t,\theta)}_{m+j+1}(m-j)<b^{(t,\theta)}_{m+j}(m-j))$.
Multiplying by $\text{E}[\mathcal{D}_{\theta+L_{m-j}}(z)]$ and then summing over $j=0,\ldots,m$ gives
\begin{equation}\label{eq:sum_c_seqs}
d_{2m+1}=\sum_{j=0}^m c_{m+j+1,m-j}^{(x,z,t,\theta)}<\sum_{j=0}^m c_{m+j,m-j}^{(x,z,t,\theta)}=d_{2m}.
\end{equation}

Proving $d_{2m+2}<d_{2m+1}$ requires some extra steps. First, note that
$$
d_{2m+2}=d_{2(m+1)}=\sum_{r=0}^{m+1} c_{m+1+r,m+1-r}^{(x,z,t,\theta)}=\sum_{j=-1}^m c_{m+2+j,m-j}^{(x,z,t,\theta)},
$$
where $j=r-1$. Noting now that $2j+1>2j\geq C_{m-j}^{(t,\theta)} \text{ for all } j=0,\ldots, m$, which implies $b^{(t,\theta)}_{m+j+2}(m-j)<b^{(t,\theta)}_{m+j+1}(m-j)$, and using the same argument as in (\ref{eq:sum_c_seqs}) yields 
$$
\sum_{j=1}^m c_{m+j+2,m-j}^{(x,z,t,\theta)}<\sum_{j=1}^m c_{m+j+1,m-j}^{(x,z,t,\theta)},
$$
where the sum is taken only over $j=1,\ldots, m$ so that the remaining terms in $d_{2m+2}$ and $d_{2m+1}$ can be compared. Indeed, it only remains to prove that
$$
c_{m+1,m+1}^{(x,z,t,\theta)}+c_{m+2,m}^{(x,z,t,\theta)}<c_{m+1,m}^{(x,z,t,\theta)}.
$$
Note now that
\begin{equation}\label{eq:compare_c_seqs_first_terms}
\frac{c_{k+1,m}^{(x,z,t,\theta)}}{c_{k,m}^{(x,z,t,\theta)}}=\frac{b^{(t,\theta)}_{k+1}(m)}{b^{(t,\theta)}_{k}(m)}=h_m(k)e^{(2k+|\theta|)t/2}\leq (|\theta|+2k+1)e^{(2k+|\theta|)t/2},
\end{equation}
where $h_m(k):=\frac{|\theta|+m+k-1}{k-m+1}\frac{|\theta|+2k+1}{|\theta|+2k-1}$, the second equality follows from (\ref{eq:b_coefs}), and the last inequality holds because $h_m(k)<h_k(k)=|\theta|+2k+1$, see the proof of Proposition 1 in \cite{Jenkins2017}.

Because by hypothesis $m\geq D_{\epsilon}^{(t,\theta)}$, recalling the definition of 
$D_{\epsilon}^{(t,\theta)}$ in (\ref{eq:D_constant}) and choosing $k=m+1$ in (\ref{eq:compare_c_seqs_first_terms}) yields
\begin{equation}\label{eq:compare_c_seqs_first_terms2}
c_{m+2,m}< (|\theta|+2k+1)e^{(2k+|\theta|)t/2}c_{m+1,m}^{(x,z,t,\theta)}<(1-\epsilon)c_{m+1,m}^{(x,z,t,\theta)}.
\end{equation}
Finally, 
\begin{align*}
\frac{c_{m+1,m+1}^{(x,z,t,\theta)}}{c_{m+1,m}^{(x,z,t,\theta)}}&=\frac{b^{(t,\theta)}_{m+1}(m+1)}{b^{(t,\theta)}_{m+1}(m)}\frac{\text{E}[\mathcal{D}_{\theta+L_{m+1}}(z)]}{\text{E}[\mathcal{D}_{\theta+L_{m}}(z)]}
=\frac{|\theta|+2m}{(m+1)(|\theta|+m)}
\frac{\text{E}[\mathcal{D}_{\theta+L_{m+1}}(z)]}{\text{E}[\mathcal{D}_{\theta+L_{m}}(z)]}\\
&=\frac{1}{(m+1)}\left(1+\frac{m}{|\theta|+m}\right)
\frac{\text{E}[\mathcal{D}_{\theta+L_{m+1}}(z)]}{\text{E}[\mathcal{D}_{\theta+L_{m}}(z)]}
<\frac{2}{(m+1)}
\frac{\text{E}[\mathcal{D}_{\theta+L_{m+1}}(z)]}{\text{E}[\mathcal{D}_{\theta+L_{m}}(z)]}
<\epsilon,
\end{align*}
where the last inequality follows because $m+1>m\geq 2\tilde{K}^{(\theta,x,z)}/\epsilon$ and using Lemma \ref{lem:decreas_expec},
yielding
$$
c_{m+1,m+1}^{(x,z,t,\theta)}+c_{m+2,m}<\epsilon c_{m+1,m}^{(x,z,t,\theta)}+(1-\epsilon)c_{m+1,m}^{(x,z,t,\theta)}=c_{m+1,m}^{(x,z,t,\theta)},
$$
which concludes the proof.
\end{proof}

\bibliographystyle{plain}
\bibliography{ms}

\end{document}